\def\BState{\State\hskip-\ALG@thistlm}
\numberwithin{equation}{section}
\newtheorem{theorem}{Theorem}[section]
\newtheorem{lem}[theorem]{Lemma}
\newtheorem{prop}[theorem]{Proposition}
\newcommand{\bigzero}{\mbox{\normalfont\Large\bfseries 0}}
\newcommand{\ds}{\displaystyle}
\newcommand{\cof}{\textup{cof }}
\title{The exponential distance matrix of bi-block graphs}
\author{Joyentanuj Das\footnote{Department of Mathematics, College of Engineering and Technology, SRM Institute of Science and Technology, Kattankulathur, Chennai, 603203, India. \indent  Email: joyentanuj@gmail.com,  joyentad@srmist.edu.in}  \quad and \quad Sumit Mohanty\footnote{Indian Institute of Management Ranchi,  Prabandhan Nagar, Vill-Mudma, Nayasarai Road, Ranchi, Jharkhand-835303, India. \indent   Email:  sumitmath@gmail.com, sumit.mohanty@iimranchi.ac.in}}
\date{}
\begin{document}

\maketitle

\begin{abstract}
Let $G$ be a connected graph with vertex set $\{v_1, v_2, \ldots, v_\mathbf{n}\}$. As a variant of the classical distance matrix, the \emph{exponential distance matrix} was introduced independently by Yan and Yeh, and by Bapat et al. For a nonzero indeterminate $q$, the exponential distance matrix $\mathscr{F} = (\mathscr{F}_{ij})_{\mathbf{n} \times \mathbf{n}}$ of $G$ is defined by $\mathscr{F}_{ij} = q^{d_{ij}},$ where $d_{ij}$ denotes the distance between vertices $v_i$ and $v_j$ in $G$. 
A connected graph is said to be a \emph{bi-block graph} if each of its blocks is a complete bipartite graph, possibly of varying bipartition sizes. In this paper, we obtain explicit expressions for the determinant, inverse, and cofactor sum of the exponential distance matrix of bi-block graphs. As a consequence, some known results concerning the exponential distance matrix and the $q$-Laplacian matrix are generalized.
\end{abstract}

\noindent {\sc\textsl{Keywords}:} Bipartite graphs, Exponential distance matrix, Determinant, Cofactor, Inverse.

\noindent {\sc\textbf{MSC}:}  05C12, 05C50

\section{Introduction  and Motivation}
Let $G=(V(G),E(G))$ denote a finite, simple, and connected graph. Here $V(G)$ is the vertex set, and $E(G)\subseteq V(G)\times V(G)$ is the edge set. When the context is unambiguous, we abbreviate the notation to $G=(V,E)$. For two vertices $i,j\in V$, we write $i\sim j$ whenever they are adjacent.  

\medskip

We now introduce some basic notation that will be used throughout this article. By ${I}_n$ we denote the $n\times n$ identity matrix, by $\mathds{1}_n$ the all-ones column vector of size $n$, and by $\mathbf{e}_i$ the standard basis vector with $1$ in the $i$th position and zeros elsewhere. The matrix ${J}_{m\times n}$ represents the $m\times n$ all-ones matrix, with ${J}_m:={J}_{m\times m}$. Likewise, $\mathbf{0}_{m\times n}$ denotes the $m\times n$ zero matrix, and simply $\mathbf{0}$ if the size is clear from context. The transpose of a matrix $A$ will be written as $A^T$.  

\medskip

A graph $G$ is called \emph{bipartite} if its vertices can be split into two disjoint sets $X$ and $Y$ such that every edge joins a vertex of $X$ with a vertex of $Y$. The complete bipartite graph $K_{s,t}$ arises when $|X|=s$, $|Y|=t$, and every vertex of $X$ is adjacent to all vertices of $Y$.  

\medskip

Endowing a connected graph $G$ with the shortest–path metric, we obtain a metric space with distance function $d(i,j)$ equal to the length of the shortest path between $i$ and $j$, and $d(i,i)=0$. Two well-studied matrices associated with this metric are the \emph{distance matrix} and the \emph{Laplacian matrix}.  

For a graph $G$ of order $n$, the distance matrix is $D(G)=[d_{ij}]$ with $d_{ij}=d(i,j)$. The Laplacian matrix of $G$, written $L(G)=[\ell_{ij}]$, is given by
\[
\ell_{ij}=
\begin{cases}
	\delta_i, & \text{if } i=j, \\
	-1, & \text{if } i\neq j \text{ and } i\sim j, \\
	0, & \text{otherwise},
\end{cases}
\]
where $\delta_i$ is the degree of vertex $i$. It is well known that $L(G)$ is symmetric and positive semidefinite. Moreover, the all-ones vector $\mathds{1}$ is an eigenvector corresponding to the eigenvalue $0$, so that $L(G)\mathds{1}=\mathbf{0}$ and $\mathds{1}^T L(G)=\mathbf{0}$ (see~\cite{Bapat}).  

\medskip

The study of distance matrices has its origin in the seminal work of Graham and Pollack~\cite{Gr1}, who proved that for any tree $T$ on $n$ vertices,  
\[
\det D(T)=(-1)^{n-1}(n-1)2^{\,n-2}.
\]
Notably, this determinant depends only on $n$ and not on the structure of the tree. Their result triggered extensive research, with several subsequent generalizations (see, e.g., \cite{Ba-Kr-Neu,Bapat0,Gr3,Gr2}).  

Later, Graham and Lov\'asz~\cite{Gr2} obtained a striking formula for the inverse of the distance matrix of a tree $T$:  
\[
D(T)^{-1} = -\tfrac{1}{2}L(T)+\frac{1}{2(n-1)}\tau\tau^T,
\qquad \text{where } \tau=(2-\delta_1,\,2-\delta_2,\,\dots,\,2-\delta_n)^T.
\]
This identity links the distance and Laplacian matrices and motivated further exploration of distance matrix inverses for wider classes of graphs. Numerous extensions are now known for weighted trees~\cite{Ba-Kr-Neu}, bidirected trees~\cite{Ba-Lal-Pati}, block graphs~\cite{Bp3}, completely positive graphs~\cite{JD}, multi-block graphs~\cite{JD1}, weighted cactoid-type digraphs~\cite{JD2}, cactoid digraphs~\cite{Hou1}, cycle–clique graphs~\cite{Hou2}, bi-block graphs~\cite{Hou3}, distance well-defined graphs~\cite{Zhou1}, and weighted cactoid digraphs~\cite{Zhou2}.  

In 2006, Yan and Yeh \cite{Yan} and Bapat et al. \cite{Bapat0} independently proposed the concept of exponential distance matrix, a variant of distance matrix. The exponential distance matrix of an \( \mathbf{n} \)-vertex graph \( G \), denoted by \( \mathscr{F} = (\mathscr{F}_{ij}) \), is the \( \mathbf{n} \times \mathbf{n} \) matrix with
\[
\mathscr{F}_{ij} = q^{d_{ij}},
\]
where \( q \) is a nonzero indeterminate. In particular, the diagonal entries of \( \mathscr{F} \) are all equal to 1. We exclude the case \( q = 0 \), because at this time the diagonal entries are undefined, while the other entries would be equal to 0. It is noted that the spectral properties of the so-called closeness matrix were recently investigated in \cite{Zheng}. Such matrix is almost the same as the exponential distance matrix \( \mathscr{F} \) with \( q = \frac{1}{2} \), while the only difference lies on the diagonal entries, which are all 1 in \( \mathscr{F} \), but all 0 in the closeness matrix.

Yan and Yeh \cite{Yan} computed the determinant of exponential distance matrix of unweighted trees. Bapat et al. \cite{Bapat0} obtained the inverse of exponential distance matrix of unweighted trees. Sivasubramanian \cite{Siva} presented a formula for the inverse of exponential distance matrix of $3$-hypertrees. A graph \( G \) is called a block graph, if each block of \( G \) is a clique (of possibly varying orders). Recently, in \cite{Xing}, Xing and Du extended the study on the exponential distance matrix from trees to block graphs. 

Our aim of this paper is to extend the study on the exponential distance matrix to bi-block graphs. More precisely, we obtain the determinant, inverse, and cofactor sum of exponential matrix for bi-block graphs, extending the results given in \cite{Bapat0,Siva,Xing,Yan}. A connected graph is called a bi-block graph if all of blocks are complete bipartite
graphs $K_{m_i,n_i}$, $i=1,2,\cdots,r$, with vertex partition of each block is $X_i \cup Y_i$, where $|X_i| = m_i$ and $|Y_i| = n_i$. If all the blocks are $K_{1,1}$, then the graph is a tree. The determinant and inverse of distance matrix of bi-block graphs were obtained by Hou and Sun in \cite{Hou3}.

\section{Notations and a few Preliminary Results}\label{sec:Notations}

Let $G$ be a bi-block graph on $\mathbf{n}$ vertices with blocks $K_{m_i, n_i}$ for $1\leq i \leq r$ with the vertex  partition $X_i \cup Y_i$ such that $|X_i|=m_i$ and $|Y_i|=n_i$. Then, $ \mathbf{n}=\sum_{i=1}^r (m_i+n_i) - (r-1).$ We now introduced two matrices $\mathbf{A}$ and $\mathbf{B}$  of  order $\mathbf{n} \times \mathbf{n}$,  and a column vector $\mu_G$  of order $\mathbf{n}$ indexed with vertex set of the bi-block graph $G$ as follows. Let  $\mathbf{A}=[a_{uv}]$ and $\mathbf{B}=[b_{uv}]$ , where
\begin{equation}\label{eqn:defn-A}
a_{uv} = \begin{cases}
\dfrac{1}{1- q^2(m_i-1)(n_i-1)},	& \text{ if } u\sim v \text{ and } u,v \in K_{m_i,n_i},\\
0, & \text{otherwise,}
\end{cases}
\end{equation}
and 
\begin{equation}\label{eqn:defn-B}
b_{uv} = \begin{cases}
	\dfrac{n_i-1}{1-q^2(m_i-1)(n_i-1)},	& \text{ if } u \neq v, u \not\sim v \text{ and } u,v \in X_i,\\
	\\
	\dfrac{m_i-1}{1-q^2(m_i-1)(n_i-1)},	& \text{ if }  u \neq v, u \not\sim v \text{ and } u,v \in Y_i,\\
	\\
	0, & \text{otherwise}.
\end{cases}
\end{equation}
Given a vertex $v$ in $G$, the $v^{th}$ entry of $\mu_G$ is given by
\begin{equation}\label{eqn:defn-mu}
\mu(v) = \sum_{v \in X_i} \frac{n_i-1}{1-q^2(m_i-1)(n_i-1)} + \sum_{v \in Y_i} \frac{m_i-1}{1-q^2(m_i-1)(n_i-1)} + (k-1),
\end{equation}
where $bi_{G}(v)=k$, is the block index of the vertex $v$. It is easy to notice that $\mathbf{A}$ is a weighted adjacency matrix of $G$, and a principal submatrix of  $\mathbf{B}$ corresponding to a block of $G$ is a weighted adjacency matrix of the complement graph of the respective block.

Without  loss of generality, let  $K_{m_r, n_r}$ be a leaf block and $H$ be the induced subgraph of $G$ on $\mathbf{m}$ vertices with blocks $K_{m_i, n_i}$ for $1\leq i \leq r-1.$  Therefore, $G= H \circledcirc K_{m_r, n_r}$ and $\mathbf{n}=\mathbf{m}+(m_r+n_r-1)$.  We will now use the notations $\mathscr{F}, \mathbf{A}, \mathbf{B}$, and $\mu$ for the bi-block graph $G$, and the corresponding notations $\widehat{\mathscr{F}}, \widehat{\mathbf{A}}, \widehat{\mathbf{B}}$, and $\widehat{\mu}$ for the subgraph $H$. Further, let us assume $v_{\mathbf{m}}$ be a cut vertex of $G$ such that $H$ and $K_{m_r, n_r}$  are connected via $v_{\mathbf{m}}$. 
 If $v_{\mathbf{m}}$ is in the vertex partition $X_r$ of $K_{m_r, n_r}$, and $u,v$ are two vertices other than $v_{\mathbf{m}}$ such that $u\in H$ and $v\in K_{m_r, n_r}$, then
$$
d(u,v) = \begin{cases}
	d(u,v_{\mathbf{m}}) + 2, 	& \text{ if } v \in X_r,\\
	d(u,v_{\mathbf{m}}) + 1,	& \text{ if } v \in Y_r.
\end{cases}
$$
Then, the $uv^{th}$ entry of the exponential distance matrix of $G$ is:
$$
\mathscr{F}_{uv} = \begin{cases}
	q^{d(u,v_{\mathbf{m}}) + 2} = q^2 \widehat{\mathscr{F}}_{i \mathbf{m}}, 	& \text{ if } v \in X_r,\\
	q^{d(u,v_{\mathbf{m}}) + 2} = q \widehat{\mathscr{F}}_{i \mathbf{m}}, 	& \text{ if } v \in Y_r.
\end{cases}
$$
For notational convenience in calculations, we will use  $K_{m_r, n_r}=K_{s, t}$. 
Let $\mathbf{E}_1$ and $\mathbf{E}_2$ represent the $\mathbf{m} \times (s-1)$ and  $\mathbf{m} \times t$ matrix, respectively, in which each entry of the last row is $1$, and all the other entries are  $0$. Let $\mathbf{E}_{\mathbf{m} \mathbf{m}}$ be the $\mathbf{m} \times \mathbf{m}$ matrix such that the only non-zero entry is at $(\mathbf{m}, \mathbf{m})$ position and the value is $1$. From these above settings we can write $\mathscr{F}, \mathbf{A}, \mathbf{B}$, and $\mu$ in the following form:

\begin{equation}\label{eqn:F}
\mathscr{F} = \left[
\begin{array}{c|c|c}
	\widehat{\mathscr{F}} & q^2 \widehat{\mathscr{F}} \mathbf{E}_1 & q \widehat{\mathscr{F}} \mathbf{E}_2\\
	\midrule
	q^2 \mathbf{E}_1^t \widehat{\mathscr{F}} & (1-q^2) I_{s-1}+q^2 J_{s-1}  & q J_{(s-1)\times t}\\
	\midrule
	q \mathbf{E}_2^t \widehat{\mathscr{F}} & q J_{t \times (s-1)} &  (1-q^2) I_{t}+ q^2 J_{t} 
\end{array}\right],
\end{equation}

$$
\mathbf{A} =  \left[
\begin{array}{c|c|c}
	\widehat{\mathbf{A}} & \mathbf{0}_{\mathbf{m} \times (s-1)} & \dfrac{1}{1-q^2(s-1)(t-1)} \mathbf{E}_2\\ \midrule
	 \mathbf{0}_{(s-1) \times \mathbf{m}} & \mathbf{0}_{s-1} & \dfrac{1}{1-q^2(s-1)(t-1)} J_{(s-1)\times t}\\ \midrule
	 \dfrac{1}{1-q^2(s-1)(t-1)} \mathbf{E}_2^t & \dfrac{1}{1-q^2(s-1)(t-1)} J_{t \times (s-1)} & \mathbf{0}_t
\end{array}\right],
$$
$$
\mathbf{B} = \left[
\begin{array}{c|c|c}
	\widehat{\mathbf{B}} & \dfrac{t-1}{1-q^2(s-1)(t-1)} \mathbf{E}_1  & \mathbf{0}_{\mathbf{m} \times t}\\ \midrule
	\dfrac{t-1}{1-q^2(s-1)(t-1)} \mathbf{E}_1^t  & \dfrac{-(t-1)}{1-q^2(s-1)(t-1)} ( I_{s-1}- J_{s-1} ) &  \mathbf{0}_{(s-1)\times t}\\ \midrule
	\mathbf{0}_{t \times \mathbf{m}} & \mathbf{0}_{t \times (s-1)} & \dfrac{-(s-1)}{1-q^2(s-1)(t-1)} (I_{t}-J_{t})
\end{array}\right],
$$
and
$$
\mu(v) = \begin{cases}
	\widehat{\mu}(v)	& \text{ if } v \in H \text{ and } v \neq v_{\mathbf{m}},\\
	\widehat{\mu}(v_{\mathbf{m}}) +  \dfrac{t-1}{1-q^2(s-1)(t-1)} -1	& \text{ if }  v = v_{\mathbf{m}},\\
	\dfrac{t-1}{1-q^2(s-1)(t-1)} 	& \text{ if } v \in X_r \text{ and } v \neq v_{\mathbf{m}},\\
	\dfrac{s-1}{1-q^2(s-1)(t-1)}	& \text{ if } v \in Y_r.
\end{cases}
$$

We now gives a few identities involving  $\mathbf{E}_1, \mathbf{E}_2$ and $ \widehat{\mathscr{F}}$. The proofs for these identities are follows from definition and hence omitted.
\begin{prop}\label{prop:id1}
Let  $ \widehat{\mathscr{F}}$, $\mathbf{E}_1, \mathbf{E}_2$ and $\mathbf{E}_{\mathbf{m} \mathbf{m}}$  be the matrices as above. Then, the following identities hold. 
\begin{enumerate}
\item [$(a)$] $\mathbf{E}_1^t \widehat{\mathscr{F}} \mathbf{E}_2 = J_{(s-1) \times t}$ and \ $\mathbf{E}_2^t \widehat{\mathscr{F}} \mathbf{E}_1 = J_{t \times (s-1)}.$

\item [$(b)$] $\mathbf{E}_1^t \widehat{\mathscr{F}} \mathbf{E}_1 = J_{(s-1)}$ and \    $\mathbf{E}_2^t \widehat{\mathscr{F}} \mathbf{E}_2 = J_{t }.$

\item [$(c)$] $\mathbf{E}_1  J_{(s-1)} =(s-1) \mathbf{E}_1$ and \ $\mathbf{E}_2  J_{t} =t \mathbf{E}_2$.

\item [$(d)$]  $\mathbf{E}_1  J_{(s-1) \times t} = (s-1) \mathbf{E}_2$   and \ $\mathbf{E}_2  J_{t \times (s-1)} = t \mathbf{E}_1$. 
\item [$(e)$]  $\mathbf{E}_1  \mathbf{E}_1^t= (s-1) \mathbf{E}_{\mathbf{m} \mathbf{m}}$ and \  $\mathbf{E}_2  \mathbf{E}_2^t= t \mathbf{E}_{\mathbf{m} \mathbf{m}}$.
\end{enumerate}
\end{prop}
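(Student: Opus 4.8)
The plan is to write each of $\mathbf{E}_1$ and $\mathbf{E}_2$ as a single rank-one outer product and then let the algebra collapse. Since the only nonzero entries of $\mathbf{E}_1$ lie in its last row, where every entry equals $1$, we have $\mathbf{E}_1 = \mathbf{e}_{\mathbf{m}} \mathds{1}_{s-1}^t$, and likewise $\mathbf{E}_2 = \mathbf{e}_{\mathbf{m}} \mathds{1}_t^t$, where $\mathbf{e}_{\mathbf{m}} \in \R^{\mathbf{m}}$ is the standard basis vector supported on the cut vertex $v_{\mathbf{m}}$. Transposing gives $\mathbf{E}_1^t = \mathds{1}_{s-1} \mathbf{e}_{\mathbf{m}}^t$ and $\mathbf{E}_2^t = \mathds{1}_t \mathbf{e}_{\mathbf{m}}^t$. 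I would also record $\mathbf{E}_{\mathbf{m} \mathbf{m}} = \mathbf{e}_{\mathbf{m}} \mathbf{e}_{\mathbf{m}}^t$ and the all-ones matrices as $J_{(s-1)\times t} = \mathds{1}_{s-1}\mathds{1}_t^t$, $J_{s-1}=\mathds{1}_{s-1}\mathds{1}_{s-1}^t$, and so on.

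With these factorizations in hand, the only two facts needed are the scalar identities $\mathds{1}_k^t \mathds{1}_k = k$ (for $k \in \{s-1,\,t\}$) and $\mathbf{e}_{\mathbf{m}}^t \widehat{\mathscr{F}} \mathbf{e}_{\mathbf{m}} = \widehat{\mathscr{F}}_{\mathbf{m} \mathbf{m}} = 1$, the latter holding because the diagonal entries of any exponential distance matrix equal $q^{0}=1$. For part $(b)$, for instance, associativity gives $\mathbf{E}_1^t \widehat{\mathscr{F}} \mathbf{E}_1 = \mathds{1}_{s-1}\, (\mathbf{e}_{\mathbf{m}}^t \widehat{\mathscr{F}} \mathbf{e}_{\mathbf{m}})\, \mathds{1}_{s-1}^t = \mathds{1}_{s-1}\mathds{1}_{s-1}^t = J_{s-1}$, and the remaining three quadratic forms in $(a)$–$(b)$ are identical after relabelling the outer vectors. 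The identities in $(c)$–$(d)$ follow from the same pattern with $\widehat{\mathscr{F}}$ removed: e.g.\ $\mathbf{E}_1 J_{s-1} = \mathbf{e}_{\mathbf{m}} \mathds{1}_{s-1}^t (\mathds{1}_{s-1}\mathds{1}_{s-1}^t) = (s-1)\, \mathbf{e}_{\mathbf{m}}\mathds{1}_{s-1}^t = (s-1)\mathbf{E}_1$, while $(e)$ reads $\mathbf{E}_1\mathbf{E}_1^t = \mathbf{e}_{\mathbf{m}}(\mathds{1}_{s-1}^t\mathds{1}_{s-1})\mathbf{e}_{\mathbf{m}}^t = (s-1)\mathbf{e}_{\mathbf{m}}\mathbf{e}_{\mathbf{m}}^t = (s-1)\mathbf{E}_{\mathbf{m} \mathbf{m}}$.

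Since every identity reduces to one scalar contraction, there is no genuine obstacle here; the work is purely bookkeeping of the inner products. The single point worth stating explicitly is $\widehat{\mathscr{F}}_{\mathbf{m} \mathbf{m}}=1$, which isolates the $(s-1)$– and $t$–sized blocks: each of $\mathbf{E}_1$ and $\mathbf{E}_2$ picks out exactly the row and column of $\widehat{\mathscr{F}}$ indexed by the cut vertex $v_{\mathbf{m}}$, and because that diagonal entry is $1$ rather than a general power of $q$, the distance structure of $H$ never enters and the resulting products depend only on $s$ and $t$. This is precisely why the identities can be asserted to follow directly from the definitions.
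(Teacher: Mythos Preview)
Your proof is correct. The paper does not actually give a proof of this proposition; it simply states that the identities ``follow from definition and hence [are] omitted.'' Your rank-one factorizations $\mathbf{E}_1 = \mathbf{e}_{\mathbf{m}}\mathds{1}_{s-1}^t$ and $\mathbf{E}_2 = \mathbf{e}_{\mathbf{m}}\mathds{1}_t^t$ make that claim fully explicit, reducing every identity to either $\mathds{1}_k^t\mathds{1}_k = k$ or $\widehat{\mathscr{F}}_{\mathbf{m}\mathbf{m}} = q^0 = 1$, and this is exactly the bookkeeping the paper had in mind.
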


We now recall a standard result on computing the determinant of block matrices. This result will be useful to prove subsequent results.

\begin{prop}\label{prop:blockdet}~\cite{Zhang}
 Let $M_{11}
   \mbox{ and } M_{22}$ be square matrices.  Then 
   \begin{small}
   $$\det \left[
\begin{array}{c|c}
M_{11} & M_{12}  \\
\midrule
\bigzero  & M_{22}
\end{array}
\right] = \det M_{11} \det M_{22}, \mbox{ and } \det \left[
\begin{array}{c|c}
M_{11} &  \bigzero \\
\midrule
 M_{21} & M_{22}
\end{array}
\right] = \det M_{11} \det M_{22}.$$
   \end{small}   
\end{prop}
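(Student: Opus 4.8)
The plan is to prove the first identity (the block upper-triangular case) directly from the Leibniz permutation expansion of the determinant, and then obtain the second identity by transposition. Suppose $M_{11}$ is $p\times p$ and $M_{22}$ is $q\times q$, so that the full matrix is $(p+q)\times(p+q)$ with rows and columns indexed by $\{1,\dots,p+q\}$. The only structural fact I would use is that the lower-left block vanishes, i.e. the entry in row $i$, column $j$ is $0$ whenever $i>p$ and $j\le p$.

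The key step is to write $\det M=\sum_{\sigma\in S_{p+q}}\operatorname{sgn}(\sigma)\prod_{i=1}^{p+q}M_{i,\sigma(i)}$ and argue that only block-preserving permutations contribute. A term is nonzero only if every factor $M_{i,\sigma(i)}$ is nonzero; in particular, for each index $i\in\{p+1,\dots,p+q\}$ one must have $\sigma(i)>p$, since otherwise $M_{i,\sigma(i)}$ lies in the zero block. Hence any contributing $\sigma$ maps $\{p+1,\dots,p+q\}$ into itself, and because $\sigma$ is a bijection of a finite set it maps this block \emph{onto} itself and therefore also restricts to a bijection of $\{1,\dots,p\}$. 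Each such $\sigma$ then decomposes uniquely as a pair $(\sigma_1,\sigma_2)$ with $\sigma_1\in S_p$ acting on $\{1,\dots,p\}$ and $\sigma_2$ acting on $\{p+1,\dots,p+q\}$ (identified with $S_q$ after reindexing), with $\operatorname{sgn}(\sigma)=\operatorname{sgn}(\sigma_1)\operatorname{sgn}(\sigma_2)$ since the two cycles are disjoint. The product $\prod_i M_{i,\sigma(i)}$ splits accordingly over the two blocks, so the double sum factors and yields $\det M=\det M_{11}\,\det M_{22}$. For the second identity, I would observe that the transpose of a block lower-triangular matrix is block upper-triangular with diagonal blocks $M_{11}^{T}$ and $M_{22}^{T}$; since $\det$ is invariant under transposition and $\det M_{ii}^{T}=\det M_{ii}$, the result follows from the case already proved.

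The main obstacle here is conceptual bookkeeping rather than computation: the delicate point is upgrading the observation that a surviving $\sigma$ sends the lower index block \emph{into} itself to the stronger statement that it \emph{preserves both} blocks, which is exactly where finiteness and bijectivity are used. An alternative route factors $M$ as the product of a block-diagonal matrix and a block-unitriangular matrix when $M_{11}$ (or $M_{22}$) is invertible, and then removes the invertibility hypothesis by a continuity or polynomial-identity argument; I would avoid this, however, since the Leibniz expansion disposes of the general case in a single pass and needs no nondegeneracy assumptions.
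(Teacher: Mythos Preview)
Your argument via the Leibniz expansion is correct and complete: the step from ``$\sigma$ maps $\{p+1,\dots,p+q\}$ into itself'' to ``$\sigma$ preserves both blocks'' is handled properly by appealing to bijectivity on a finite set, the sign factorisation $\operatorname{sgn}(\sigma)=\operatorname{sgn}(\sigma_1)\operatorname{sgn}(\sigma_2)$ is right since the pieces have disjoint supports, and the transposition trick for the lower-triangular case is standard.

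There is nothing to compare against, however, because the paper does not prove this proposition at all: it is quoted as a known result from Zhang's textbook \cite{Zhang} and used as a black box. Your write-up is therefore a self-contained justification of a fact the authors simply import.
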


Next, we prove a lemma that plays a important role in computing the determinant of the exponential matrix for bi-block graphs.

\begin{lem}
Let $G$ be a bi-block graph with blocks $K_{m_i,n_i}$ for $i=1,2,\ldots, r$  on $\mathbf{n}=\sum_{i=1}^r (m_i+n_i)-(r-1)$ vertices. 
Let $q$ be a non-zero indeterminate such that $q\neq \pm 1 $, and $\mathscr{F}$ be the exponential distance matrix of $G$. Under the assumption and notations introduced as earlier, if $\mathscr{F}$  as defined Eqn.~\eqref{eqn:F}, and $L$ is a lower triangular block matrix given by 
 \begin{equation}
 	L = \left[
\begin{array}{c|c|c}
	I_{\mathbf{m}} & \mathbf{0}_{\mathbf{m} \times (s-1)} & \mathbf{0}_{\mathbf{m} \times t}\\ \midrule
		-q^2 \mathbf{E}_1^t  & I_{s-1} & \mathbf{0}_{(s-1)\times t}\\ \midrule
		-q \mathbf{E}_2^t + \dfrac{q^3}{q^2(s-1)+1} J_{t \times (s-1)}\mathbf{E}_1^t & \dfrac{-q}{q^2(s-1)+1} J_{t \times (s-1)} & I_{t}
\end{array}\right],
 \end{equation}
 then the product $L\mathscr{F}$ is is an upper triangular block matrix and is given by
 	\begin{equation}\label{eqn:LF}
	L\mathscr{F} = \left[
\begin{array}{c|c|c}
	\widehat{\mathscr{F}} & \ast & \ast\\ \midrule
		\mathbf{0}_{(s-1)\times \mathbf{m}} & (1-q^2)(I_{s-1}+q^2 J_{s-1}) & \ast\\ \midrule
		\mathbf{0}_{t \times \mathbf{m}} & \mathbf{0}_{t \times (s-1)} & (1-q^2)I_{t} +\dfrac{q^2(q^2-1)(s-1)}{q^2(s-1)+1}J_t 
	\end{array}\right].
 \end{equation}
 Furthermore, $\det \mathscr{F}= \det (L\mathscr{F}).$
\end{lem}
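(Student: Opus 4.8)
The plan is to verify the asserted factorization by a direct block multiplication and then deduce the determinant identity almost for free. The determinant claim is the easy half: the matrix $L$ is lower block-triangular with the identity blocks $I_{\mathbf{m}}, I_{s-1}, I_t$ on its diagonal, so by iterating Proposition~\ref{prop:blockdet} (grouping the lower-right $2\times2$ block and recursing) we get $\det L = \det I_{\mathbf{m}}\det I_{s-1}\det I_t = 1$, whence $\det(L\mathscr{F}) = \det L\cdot\det \mathscr{F} = \det \mathscr{F}$. Thus the whole content of the lemma lies in checking that $L\mathscr{F}$ has exactly the upper-triangular shape of Eqn.~\eqref{eqn:LF}: that the three sub-diagonal blocks vanish and the two nontrivial diagonal blocks take the stated values. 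The three blocks above the diagonal may be recorded as $\ast$, since they are irrelevant to the determinant.

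I would compute $L\mathscr{F}$ block by block against the $3\times3$ partition of $\mathscr{F}$ from Eqn.~\eqref{eqn:F} with row/column block sizes $\mathbf{m}, s-1, t$. The first row-block of $L$ is $(I_{\mathbf{m}},\mathbf{0},\mathbf{0})$, so the top row of $L\mathscr{F}$ simply reproduces the top row of $\mathscr{F}$, giving the $(1,1)$ entry $\widehat{\mathscr{F}}$ and leaving the rest as $\ast$. For the $(2,1)$ block, the second row-block $(-q^2\mathbf{E}_1^t,\, I_{s-1},\, \mathbf{0})$ meets the first column-block and produces $-q^2\mathbf{E}_1^t\widehat{\mathscr{F}} + q^2\mathbf{E}_1^t\widehat{\mathscr{F}} = \mathbf{0}$ by immediate cancellation. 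The diagonal $(2,2)$ block evaluates to $-q^4\,\mathbf{E}_1^t\widehat{\mathscr{F}}\mathbf{E}_1 + (1-q^2)I_{s-1}+q^2 J_{s-1}$, and substituting $\mathbf{E}_1^t\widehat{\mathscr{F}}\mathbf{E}_1 = J_{s-1}$ from Proposition~\ref{prop:id1}$(b)$ collapses this to $(1-q^2)(I_{s-1}+q^2 J_{s-1})$, matching Eqn.~\eqref{eqn:LF}.

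The third row-block is the delicate one, and I expect the $(3,2)$ block to be the main obstacle. Writing that row-block as $\big(-q\mathbf{E}_2^t + \tfrac{q^3}{q^2(s-1)+1}J_{t\times(s-1)}\mathbf{E}_1^t,\ \tfrac{-q}{q^2(s-1)+1}J_{t\times(s-1)},\ I_t\big)$, the $(3,1)$ block again cancels cleanly once one pairs the $-q$ and $+q$ coefficients of $\mathbf{E}_2^t\widehat{\mathscr{F}}$ and the two copies of $J_{t\times(s-1)}\mathbf{E}_1^t\widehat{\mathscr{F}}$. The $(3,2)$ block, by contrast, vanishes only after combining four contributions and repeatedly applying the reductions $\mathbf{E}_2^t\widehat{\mathscr{F}}\mathbf{E}_1 = J_{t\times(s-1)}$ (Proposition~\ref{prop:id1}$(a)$), $\mathbf{E}_1^t\widehat{\mathscr{F}}\mathbf{E}_1 = J_{s-1}$ (Proposition~\ref{prop:id1}$(b)$), together with the elementary all-ones identity $J_{t\times(s-1)}J_{s-1} = (s-1)J_{t\times(s-1)}$; the coefficient of $J_{t\times(s-1)}$ then telescopes to $q(1-q^2)\big(1 - \tfrac{q^2(s-1)+1}{q^2(s-1)+1}\big)=0$. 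This is exactly where the non-obvious correction term $\tfrac{q^3}{q^2(s-1)+1}J_{t\times(s-1)}\mathbf{E}_1^t$ in $L$ is engineered to act, so getting the bookkeeping on that cancellation right is the crux of the proof. Finally, the $(3,3)$ block uses $\mathbf{E}_2^t\widehat{\mathscr{F}}\mathbf{E}_2 = J_t$ (Proposition~\ref{prop:id1}$(b)$) and $J_{t\times(s-1)}J_{(s-1)\times t} = (s-1)J_t$, and its $J_t$-coefficient simplifies to $\tfrac{q^2(q^2-1)(s-1)}{q^2(s-1)+1}$, again matching Eqn.~\eqref{eqn:LF}. Once all three sub-diagonal blocks are confirmed zero and both diagonal blocks verified, the determinant identity follows from $\det L = 1$ as noted above. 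The hypothesis $q\neq\pm1$ guarantees $1-q^2\neq 0$, which is what keeps the diagonal blocks nondegenerate when $\det\mathscr{F}$ is read off from this factorization in the subsequent results.
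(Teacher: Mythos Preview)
Your proposal is correct and follows essentially the same approach as the paper: a direct block-by-block computation of $L\mathscr{F}$ using the identities of Proposition~\ref{prop:id1}, verifying that the three sub-diagonal blocks vanish and the two nontrivial diagonal blocks take the stated forms. Your treatment is slightly more explicit than the paper's in one respect---you spell out why $\det L = 1$ via the block-triangular structure, whereas the paper simply asserts ``Hence, the desired result follows''---but otherwise the argument is the same.
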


\begin{proof}
Let  $$L\mathscr{F} = \left[
\begin{array}{c|c|c}
	M_{11} & \ast & \ast\\ \midrule
		M_{21} & M_{22}& \ast\\ \midrule
		M_{31} & M_{32} & M_{33}
	\end{array}\right].$$
From Eqns.~\eqref{eqn:F} and~\eqref{eqn:LF}, we get $	M_{11} = 	\widehat{\mathscr{F}} $  and $	M_{21} = -q^2 \mathbf{E}_1^t\widehat{\mathscr{F}} + I_{s-1}  q^2 \mathbf{E}_1^t 	\widehat{\mathscr{F}}=\mathbf{0}_{(s-1)\times \mathbf{m}}.$ Using the identity $\mathbf{E}_1^t \widehat{\mathscr{F}}\mathbf{E}_1=J_{s-1}$ from Proposition~\ref{prop:id1}, we get
	\begin{align}
		M_{22} &= (-q^2 \mathbf{E}_1^t)( q^2 \widehat{\mathscr{F}}\mathbf{E}_1 )+ q^2 J_{s-1} + (1-q^2) I_{s-1} \nonumber\\
		&= -q^4 \mathbf{E}_1^t \widehat{\mathscr{F}}\mathbf{E}_1 + q^2 J_{s-1} + (1-q^2) I_{s-1} \nonumber\\
		&= -q^4 J_{s-1} + q^2 J_{s-1} + (1-q^2) I_{s-1} \nonumber\\
		&= (1-q^2)(I_{s-1}+q^2 J_{s-1}). \nonumber
	\end{align}
Next,
	\begin{align}
		M_{31} &=  \left[-q \mathbf{E}_2^t + \dfrac{q^3}{q^2(s-1)+1} J_{t \times (s-1)}\mathbf{E}_1^t\right]\widehat{\mathscr{F}} - \dfrac{q}{q^2(s-1)+1} J_{t \times (s-1)} \times q^2 \mathbf{E}_1^t + q^2 \mathbf{E}_2^t 	\widehat{\mathscr{F}} \nonumber\\
		&= -  q^2 \mathbf{E}_2^t 	\widehat{\mathscr{F}} + \dfrac{q^3}{q^2(s-1)+1} J_{t \times (s-1)}\mathbf{E}_1^t \widehat{\mathscr{F}} - \dfrac{q^3}{q^2(s-1)+1} J_{t \times (s-1)}\mathbf{E}_1^t \widehat{\mathscr{F}} + q^2 \mathbf{E}_2^t 	\widehat{\mathscr{F}} \nonumber\\
		&= \mathbf{0}_{t \times \mathbf{m}}. \nonumber
	\end{align}
Using 	 the identities $\mathbf{E}_2^t \widehat{\mathscr{F}} \mathbf{E}_1 = J_{t \times (s-1)}$ and $\mathbf{E}_1^t \widehat{\mathscr{F}}\mathbf{E}_1=J_{s-1}$ from Proposition~\ref{prop:id1}, we get
	\begin{align}
		M_{32} &=  \left[-q \mathbf{E}_2^t + \dfrac{q^3}{q^2(s-1)+1} J_{t \times (s-1)}\mathbf{E}_1^t\right]  q^2 \widehat{\mathscr{F}}\mathbf{E}_1   \nonumber\\
		& \quad +\left[- \dfrac{q}{q^2(s-1)+1} J_{t \times (s-1)} \right]\left[q^2 J_{s-1} + (1-q^2) I_{s-1}\right] + q J_{t \times (s-1)} \nonumber\\
		&=  -q^3 \mathbf{E}_2^t \widehat{\mathscr{F}}\mathbf{E}_1  + \dfrac{q^5}{q^2(s-1)+1} J_{t \times (s-1)}\mathbf{E}_1^t \widehat{\mathscr{F}}\mathbf{E}_1   \nonumber\\
		& \quad - \dfrac{q^3 (s-1)}{q^2(s-1)+1} J_{t \times (s-1)} - \dfrac{q(1-q^2)}{q^2(s-1)+1} J_{t \times (s-1)}  + q J_{t \times (s-1)} \nonumber\\
		&=  -q^3 J_{t \times (s-1)}  + \dfrac{q^5 (s-1)}{q^2(s-1)+1} J_{t \times (s-1)}  - \dfrac{q^3 (s-1)}{q^2(s-1)+1} J_{t \times (s-1)} \nonumber\\
		& \quad  - \dfrac{q(1-q^2)}{q^2(s-1)+1} J_{t \times (s-1)}  + q J_{t \times (s-1)} \nonumber\\
		&= \left[-q^3  + \dfrac{q^5 (s-1)}{q^2(s-1)+1} - \dfrac{q^3 (s-1)}{q^2(s-1)+1} - \dfrac{q(1-q^2)}{q^2(s-1)+1}  + q \right] J_{t \times (s-1)}  \nonumber\\
		&= \mathbf{0}_{(s-1)\times \mathbf{m}}. \nonumber
	\end{align}
	Finally, using the identities $\mathbf{E}_2^t \widehat{\mathscr{F}} \mathbf{E}_2 = J_{t }$ and $\mathbf{E}_1^t \widehat{\mathscr{F}}\mathbf{E}_2=J_{ (s-1)\times t }$ from Proposition~\ref{prop:id1}, we get
	\begin{align}
		M_{33} &= \left[-q \mathbf{E}_2^t + \dfrac{q^3}{q^2(s-1)+1} J_{t \times (s-1)}\mathbf{E}_1^t\right] q \widehat{\mathscr{F}}\mathbf{E}_2   \nonumber\\
		& \quad +\left[- \dfrac{q}{q^2(s-1)+1} J_{t \times (s-1)} \right]  q J_{(s-1) \times t} + q^2 J_{t} + (1-q^2) I_{t} \nonumber\\
		&= -q^2 \mathbf{E}_2^t\widehat{\mathscr{F}}\mathbf{E}_2 + \dfrac{q^4}{q^2(s-1)+1} J_{t \times (s-1)}\mathbf{E}_1^t \widehat{\mathscr{F}}\mathbf{E}_2   \nonumber\\
		& \quad - \dfrac{q^2}{q^2(s-1)+1} J_{t \times (s-1)}  J_{(s-1) \times t} + q^2 J_{t} + (1-q^2) I_{t} \nonumber\\
		&= -q^2 J_t + \dfrac{q^4}{q^2(s-1)+1} J_{t \times (s-1)} J_{(s-1) \times t}  \nonumber\\
		& \quad - \dfrac{q^2}{q^2(s-1)+1} J_{t \times (s-1)} J_{(s-1) \times t} + q^2 J_{t} + (1-q^2) I_{t} \nonumber\\
		&= -q^2 J_t + \dfrac{q^4(s-1)}{q^2(s-1)+1} J_t  - \dfrac{q^2(s-1)}{q^2(s-1)+1} J_t + q^2 J_{t} + (1-q^2) I_{t} \nonumber\\
		&= (1-q^2)I_{t} +\dfrac{q^2(q^2-1)(s-1)}{q^2(s-1)+1}J_t . \nonumber
	\end{align}
Hence, the desired result follows.
\end{proof}

\section{Determinant}
In this section, we will compute the determinant of the exponential matrix for bi-block graphs. Before proceeding further, we first recall a few known results from matrix theory which will be used in subsequent results. Let $M$ be an $n\times n$ matrix partitioned as
\begin{equation}\label{eqn:B}
M= \left[
\begin{array}{c|c}
M_{11}& M_{12} \\
\midrule
M_{21} &M_{22}
\end{array}
\right],
\end{equation}
where $ M_{11}$ and $M_{22}$ are square matrices. If  $M_{11}$ is nonsingular, then the Schur complement of $M_{11}$ in $M$ is defined to be the matrix $M/M_{11}= M_{22}-M_{21}M_{11}^{-1}M_{12}$. We now state a result that determines the determinant and inverse of a partitioned matrix using Schur complement.
\begin{prop}\label{prop:schur_det_inv}\cite{Zhang1}
	Let $M$ be a nonsingular matrix partitioned as in Eqn~(\ref{eqn:B}). If  $M_{11}$ is square and invertible, then $$\det(M) = \det(M_{11}) \det(M/M_{11}).$$
Moreover, if $M$ is a nonsingular matrix, then	
$$M^{-1}=\left[
\begin{array}{c|c}
M_{11}^{-1}+ M_{11}^{-1}M_{12}(M/M_{11})^{-1}M_{21}M_{11}^{-1}& -M_{11}^{-1} M_{12}(M/M_{11})^{-1} \\
\midrule
-(M/M_{11})^{-1}M_{21}M_{11}^{-1} & (M/M_{11})^{-1}
\end{array}
\right].$$
\end{prop}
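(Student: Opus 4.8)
The plan is to prove both assertions by exhibiting an explicit block factorization of $M$ and then reading off the determinant and the inverse from the factors. The key observation is that, since $M_{11}$ is invertible, the off-diagonal blocks can be eliminated by block Gaussian elimination. Concretely, I would first verify by direct multiplication the identity
\begin{equation*}
M = \begin{bmatrix} I & \mathbf{0} \\ M_{21}M_{11}^{-1} & I \end{bmatrix} \begin{bmatrix} M_{11} & M_{12} \\ \mathbf{0} & M/M_{11} \end{bmatrix},
\end{equation*}
where $M/M_{11} = M_{22} - M_{21}M_{11}^{-1}M_{12}$ is the Schur complement. Expanding the right-hand side, the $(2,1)$ block becomes $M_{21}M_{11}^{-1}M_{11} = M_{21}$, the $(2,2)$ block becomes $M_{21}M_{11}^{-1}M_{12} + (M_{22}-M_{21}M_{11}^{-1}M_{12}) = M_{22}$, and the top row reproduces $M_{11}, M_{12}$, confirming the factorization.

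For the determinant, I would apply Proposition~\ref{prop:blockdet} to each factor. The first (block lower-triangular) factor has identity diagonal blocks, so its determinant is $1$; the second (block upper-triangular) factor has determinant $\det(M_{11})\det(M/M_{11})$. Multiplicativity of the determinant then yields $\det(M) = \det(M_{11})\det(M/M_{11})$, as claimed. As a by-product, when $M$ is nonsingular this identity forces $\det(M/M_{11}) \neq 0$, so the Schur complement is itself invertible, a fact needed to make sense of the inverse formula.

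For the inverse, I would refine the above into a block $LDU$ decomposition,
\begin{equation*}
M = \begin{bmatrix} I & \mathbf{0} \\ M_{21}M_{11}^{-1} & I \end{bmatrix} \begin{bmatrix} M_{11} & \mathbf{0} \\ \mathbf{0} & M/M_{11} \end{bmatrix} \begin{bmatrix} I & M_{11}^{-1}M_{12} \\ \mathbf{0} & I \end{bmatrix},
\end{equation*}
again checked by direct multiplication. Each of the three factors is trivially invertible: the two unipotent triangular factors are inverted by negating their off-diagonal block, and the block-diagonal middle factor is inverted entrywise. Using $M^{-1} = U^{-1}D^{-1}L^{-1}$ and multiplying the three inverses out in this reversed order produces exactly the stated block form, with $(M/M_{11})^{-1}$ in the $(2,2)$ position and the corresponding Schur-complement corrections in the remaining three blocks.

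I do not anticipate a genuine obstacle here, since the result is classical. The only points requiring care are the bookkeeping in the matrix products, in particular respecting the reversed order of factors when inverting the $LDU$ product, and recording explicitly that nonsingularity of $M$ together with invertibility of $M_{11}$ guarantees that $M/M_{11}$ is invertible, which is precisely what legitimizes writing $(M/M_{11})^{-1}$ in the inverse formula.
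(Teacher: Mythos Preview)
Your proof proposal is correct and follows the standard block-$LDU$ factorization argument. However, the paper does not supply its own proof of this proposition: it is stated as a citation from~\cite{Zhang1} and used as a black box, so there is nothing in the paper to compare your argument against.
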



Next we state a result without proof that gives the  eigenvalues and inverse for matrix of the  form $aI+bJ$.
\begin{lem}\label{lem:aI+bJ}
Let  $n \geq 2$   and $J_n$, $I_n$ be matrices as defined before. For $a \neq 0$, the eigenvalues of $aI_n+bJ_n$ are $a$ with multiplicity $n-1$, $a+nb$ with multiplicity $1$ and the determinant is given by $a^{n-1}(a+nb)$. Moreover, the matrix is invertible if and only if $a+nb \neq 0$ and the inverse is given by $$(aI_n + bJ_n)^{-1} = \frac{1}{a} \left(I_n - \frac{b}{a+nb} J_n\right).$$
\end{lem}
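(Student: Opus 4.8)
The plan is to exploit the fact that $J_n$ is a rank-one matrix whose spectrum is completely understood, and then transfer that information to $aI_n+bJ_n$ via the obvious spectral relationship. First I would record that $J_n=\mathds{1}_n\mathds{1}_n^T$ has the eigenvalue $n$ with multiplicity $1$ (with eigenvector $\mathds{1}_n$) and the eigenvalue $0$ with multiplicity $n-1$ (whose eigenspace is the hyperplane $\{x : \mathds{1}_n^T x = 0\}$). Since $aI_n+bJ_n$ is a polynomial in $J_n$, every eigenvector of $J_n$ is an eigenvector of $aI_n+bJ_n$, and the corresponding eigenvalue transforms as $\lambda \mapsto a+b\lambda$. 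Consequently $aI_n+bJ_n$ has eigenvalue $a+nb$ with multiplicity $1$ and eigenvalue $a$ with multiplicity $n-1$, which is precisely the claimed spectrum.

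From the eigenvalue list the determinant is immediate: the determinant is the product of the eigenvalues counted with multiplicity, giving $\det(aI_n+bJ_n)=a^{\,n-1}(a+nb)$. The invertibility criterion then follows at once, since a matrix is nonsingular exactly when its determinant is nonzero; given $a\neq 0$ this reduces to the condition $a+nb\neq 0$.

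For the inverse I would use the ansatz that $(aI_n+bJ_n)^{-1}$ must itself lie in the commutative algebra spanned by $I_n$ and $J_n$, so I write it as $cI_n+dJ_n$ for scalars $c,d$ to be determined. Multiplying out and using the key relation $J_n^2 = nJ_n$ gives
\begin{equation}
(aI_n+bJ_n)(cI_n+dJ_n) = acI_n + (ad+bc+nbd)J_n,
\end{equation}
and matching this to $I_n$ forces $ac=1$ and $ad+bc+nbd=0$. Solving yields $c=\tfrac{1}{a}$ and $d=-\tfrac{b}{a(a+nb)}$, which rearranges exactly into $\tfrac{1}{a}\bigl(I_n-\tfrac{b}{a+nb}J_n\bigr)$; one then verifies by direct substitution that this candidate indeed multiplies to $I_n$ on both sides.

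None of these steps presents a genuine obstacle, as the entire computation hinges on the single identity $J_n^2=nJ_n$. The only point requiring mild care is the bookkeeping of the two scalar equations when solving for $c$ and $d$, together with explicitly invoking the hypotheses $a\neq 0$ and $a+nb\neq 0$ to guarantee that the divisions in the formula for the inverse are legitimate.
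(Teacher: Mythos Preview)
Your argument is correct and complete: the spectral mapping from $J_n$ to $aI_n+bJ_n$, the determinant as the product of eigenvalues, and the ansatz $cI_n+dJ_n$ for the inverse (solved via $J_n^2=nJ_n$) are all valid and carefully justified. The paper actually states this lemma \emph{without proof} (``Next we state a result without proof\ldots''), so there is no in-paper argument to compare against; your write-up would serve perfectly well as the omitted proof.
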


We now compute the determinant of the exponential matrix for a complete bipartite graph.

\begin{lem}\label{lem:F-1-det}
	Let $\mathscr{F}$ be the exponential distance matrix of complete bipartite graph $K_{s,t}$. If  $q$ is a nonzero indeterminate such that $q\neq \pm 1 $, then	 
	$$\det(\mathscr{F})= (1-q^2)^{s+t-1} \left[1-q^2(s-1)(t-1)\right].$$
\end{lem}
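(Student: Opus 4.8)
The plan is to exploit the simple block structure of $\mathscr{F}$ for $K_{s,t}$ and reduce the determinant to two easy $aI+bJ$ determinants via a Schur complement. First I would record the distances in $K_{s,t}$: two vertices in the same part are at distance $2$, two vertices in opposite parts are at distance $1$, and each vertex is at distance $0$ from itself. Ordering the $s$ vertices of $X$ before the $t$ vertices of $Y$, this gives
$$\mathscr{F} = \left[\begin{array}{c|c} (1-q^2)I_s + q^2 J_s & q\,J_{s\times t} \\ \midrule q\,J_{t\times s} & (1-q^2)I_t + q^2 J_t \end{array}\right].$$
I would set $M_{11} = (1-q^2)I_s + q^2 J_s$, $M_{12} = q\,J_{s\times t}$, $M_{21} = q\,J_{t\times s}$, $M_{22} = (1-q^2)I_t + q^2 J_t$, and apply Proposition~\ref{prop:schur_det_inv}.

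Next I would handle $M_{11}$ using Lemma~\ref{lem:aI+bJ} with $a=1-q^2$ and $b=q^2$: it is invertible precisely when $1+(s-1)q^2\neq 0$, with $\det M_{11} = (1-q^2)^{s-1}(1+(s-1)q^2)$ and $M_{11}^{-1} = \tfrac{1}{1-q^2}\bigl(I_s - \tfrac{q^2}{1+(s-1)q^2}J_s\bigr)$. The key simplification is that $\mathds{1}_s$ is an eigenvector of $M_{11}^{-1}$ with eigenvalue $1/(1+(s-1)q^2)$, so $M_{11}^{-1}J_{s\times t} = \tfrac{1}{1+(s-1)q^2}J_{s\times t}$; then, using $J_{t\times s}J_{s\times t}=sJ_t$, I obtain $M_{21}M_{11}^{-1}M_{12} = \tfrac{q^2 s}{1+(s-1)q^2}J_t$. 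Hence the Schur complement $M/M_{11} = M_{22} - M_{21}M_{11}^{-1}M_{12}$ is again of the form $aI_t + bJ_t$, with $a = 1-q^2$ and $b = q^2 - \tfrac{q^2 s}{1+(s-1)q^2}$.

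Then I would simplify $b$: combining over the common denominator collapses the numerator to $-q^2(s-1)(1-q^2)$, so $b = -\tfrac{q^2(s-1)(1-q^2)}{1+(s-1)q^2}$. Applying Lemma~\ref{lem:aI+bJ} once more gives $\det(M/M_{11}) = (1-q^2)^{t-1}(a+tb)$, and after factoring out $(1-q^2)$ the bracket becomes $\tfrac{1-q^2(s-1)(t-1)}{1+(s-1)q^2}$, that is, $\det(M/M_{11}) = (1-q^2)^{t}\cdot\tfrac{1-q^2(s-1)(t-1)}{1+(s-1)q^2}$. Multiplying by $\det M_{11}$ as in Proposition~\ref{prop:schur_det_inv} cancels the factor $1+(s-1)q^2$ and leaves exactly $(1-q^2)^{s+t-1}\bigl[1-q^2(s-1)(t-1)\bigr]$.

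The main obstacle is not conceptual but the bookkeeping in the two simplification steps, particularly verifying that the $J_t$-coefficient of the Schur complement collapses to the stated form and that the spurious factor $1+(s-1)q^2$ cancels cleanly. A secondary point to address is the degenerate case $1+(s-1)q^2=0$, where the Schur reduction is invalid; since both $\det\mathscr{F}$ and the claimed expression are polynomials in $q$ agreeing on all but finitely many values, the identity extends to these values as an identity of polynomials. I would also note an alternative route that sidesteps the invertibility caveat entirely: $\mathscr{F}$ has eigenvalue $1-q^2$ with multiplicity $s+t-2$ on vectors summing to zero within each part, while the remaining $2\times 2$ action on $\mathrm{span}\{(\mathds{1}_s,0),(0,\mathds{1}_t)\}$ has determinant $(1-q^2)\bigl[1-q^2(s-1)(t-1)\bigr]$, giving the same final formula directly.
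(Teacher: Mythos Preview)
Your proposal is correct and follows essentially the same Schur complement approach as the paper: the same $2\times 2$ block partition of $\mathscr{F}$, the same use of Lemma~\ref{lem:aI+bJ} to invert $M_{11}$ and evaluate $\det(M/M_{11})$, and the same cancellation of $1+(s-1)q^2$ at the end. Your treatment is in fact slightly more careful than the paper's, since you explicitly dispose of the degenerate case $1+(s-1)q^2=0$ (which the paper silently ignores) and you note the alternative eigenvalue argument.
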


\begin{proof}
The exponential distance matrix of $K_{s,t}$ has the following block form:
$$ \mathscr{F} = \begin{bmatrix}
		 (1-q^2) I_{s}+q^2 J_{s}  & q J_{s\times t}\\
		q J_{t \times s} &  (1-q^2) I_{t}+q^2 J_{t} 
	\end{bmatrix}= \left[
\begin{array}{c|c}
M_{11}& M_{12} \\
\midrule
M_{21} &M_{22}
\end{array}\right].	$$
Using Lemma~\ref{lem:aI+bJ}, $\det M_{11}=(1-q^2)^{s-1}[q^2(s-1)+1]\neq 0,$ and hence
\begin{equation}\label{eqn:M11_inv}
M_{11}^{-1}= \dfrac{1}{1-q^2}\left[ I_s- \dfrac{q^2}{q^2(s-1)+1}J_s\right].
\end{equation}
The Schur complement of $M_{11}$ in $\mathscr{F}$ is given by
\begin{align}\label{eqn:shur_comp}
\mathscr{F}/M_{11}&= M_{22}-M_{21}M_{11}^{-1}M_{12} \nonumber\\
&= \left[(1-q^2) I_{t}+q^2 J_{t} \right]- \dfrac{q^2}{1-q^2}\left[J_{t \times s}\left(I_s- \dfrac{q^2}{q^2(s-1)+1}J_s\right)J_{s\times t}\right] \nonumber\\
&= \left[(1-q^2) I_{t}+q^2 J_{t} \right]- \dfrac{q^2}{1-q^2}\left[s- \dfrac{q^2s^2}{q^2(s-1)+1}\right]J_t \nonumber\\
&=(1-q^2) I_{t} -  \dfrac{q^2(1-q^2)(s-1)}{q^2(s-1)+1}J_t.
\end{align}
Thus, using Lemma~\ref{lem:aI+bJ}, we get
\begin{align*}
\det (\mathscr{F}/M_{11})&= (1-q^2)^{t-1} \left[(1-q^2) - t \  \dfrac{q^2(1-q^2)(s-1)}{q^2(s-1)+1} \right]= (1-q^2)^{t}\left[ \dfrac{1-q^2(s-1)(t-1)}{q^2(s-1)+1}\right].
\end{align*}
By Proposition~\ref{prop:schur_det_inv}, $\det \mathscr{F}=\det M_{11} \det (\mathscr{F}/M_{11})$ and hence the desired result follows.
\end{proof}

\begin{theorem} 
Let $G$ be a bi-block graph with blocks $K_{m_i,n_i}$ for $i=1,2,\ldots, r$  on $\mathbf{n}=\sum_{i=1}^r (m_i+n_i)-(r-1)$ vertices. If  $q$ is a non-zero indeterminate such that $q\neq \pm 1 $, and $\mathscr{F}$ be the exponential distance matrix of $G$, then, 
$$\det(\mathscr{F})= (1-q^2)^{\mathbf{n}-1}\prod_{i=1}^{r} \left[1-q^2(m_i-1)(n_i-1)\right].$$
\end{theorem}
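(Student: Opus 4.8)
The plan is to argue by induction on the number of blocks $r$. For the base case $r=1$, the graph $G$ is a single complete bipartite block $K_{m_1,n_1}$, so $\mathbf{n}=m_1+n_1$, and Lemma~\ref{lem:F-1-det} delivers exactly $(1-q^2)^{\mathbf{n}-1}\left[1-q^2(m_1-1)(n_1-1)\right]$, which is the claimed formula with a single factor in the product.

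For the inductive step, I would assume the result for every bi-block graph with $r-1$ blocks. Since any bi-block graph with $r\ge 2$ blocks has a leaf block, I can write $G=H\circledcirc K_{m_r,n_r}$ with $H$ the induced subgraph on the remaining $r-1$ blocks, cut vertex $v_{\mathbf{m}}$, and $\mathbf{n}=\mathbf{m}+(m_r+n_r-1)$, exactly as arranged in Section~\ref{sec:Notations}. Writing $K_{m_r,n_r}=K_{s,t}$, the preparatory lemma (the one that constructs the lower-triangular matrix $L$) already gives $\det\mathscr{F}=\det(L\mathscr{F})$ together with the block upper-triangular form~\eqref{eqn:LF}, whose diagonal blocks are $\widehat{\mathscr{F}}$, $(1-q^2)(I_{s-1}+q^2J_{s-1})$, and $(1-q^2)I_t+\tfrac{q^2(q^2-1)(s-1)}{q^2(s-1)+1}J_t$.

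The next step is to apply Proposition~\ref{prop:blockdet}, iterated over the three diagonal blocks, so that $\det\mathscr{F}$ becomes the product of their determinants. The first determinant is supplied by the induction hypothesis applied to $H$, namely $(1-q^2)^{\mathbf{m}-1}\prod_{i=1}^{r-1}\left[1-q^2(m_i-1)(n_i-1)\right]$. The remaining two blocks are of the form $aI+bJ$, so Lemma~\ref{lem:aI+bJ} evaluates them directly: the second equals $(1-q^2)^{s-1}\left[q^2(s-1)+1\right]$, and the third, after rewriting $q^2(q^2-1)=-q^2(1-q^2)$ and collecting terms, equals $(1-q^2)^{t}\cdot\frac{1-q^2(s-1)(t-1)}{q^2(s-1)+1}$.

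The one step to watch is the cancellation when these last two factors are multiplied: the numerator factor $q^2(s-1)+1$ coming from the second block exactly cancels the denominator coming from the third, collapsing their product to $(1-q^2)^{s+t-1}\left[1-q^2(s-1)(t-1)\right]=(1-q^2)^{m_r+n_r-1}\left[1-q^2(m_r-1)(n_r-1)\right]$. Multiplying by the induction-hypothesis factor and using $\mathbf{n}=\mathbf{m}+(m_r+n_r-1)$ to reconcile the exponent $(\mathbf{m}-1)+(m_r+n_r-1)=\mathbf{n}-1$ then yields the stated formula; if instead $v_{\mathbf{m}}\in Y_r$, the roles of $s$ and $t$ are swapped throughout, but the final expression is symmetric in $m_r$ and $n_r$, so the conclusion is unaffected. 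I do not expect a genuine obstacle, since the preparatory lemma has already absorbed the substantive linear algebra (the triangularization of $\mathscr{F}$); what remains is the bookkeeping of the $(1-q^2)$-exponent across the decomposition and the single scalar cancellation just described.
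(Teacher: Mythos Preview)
Your proposal is correct and follows essentially the same route as the paper: induction on $r$, base case via Lemma~\ref{lem:F-1-det}, inductive step via the triangularization lemma~\eqref{eqn:LF} followed by Proposition~\ref{prop:blockdet} and Lemma~\ref{lem:aI+bJ}, with the same cancellation of $q^2(s-1)+1$ and exponent bookkeeping. Your remark about the symmetry in $m_r,n_r$ when $v_{\mathbf{m}}\in Y_r$ is a small addition not made explicit in the paper, but otherwise the arguments coincide.
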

\begin{proof}
We will prove this result using induction on the number of blocks $r$ in $G$. From Lemma~\ref{lem:F-1-det}, the result is hold true for $r=1$. Let us assume that the result is true for bi-block graphs with $r-1$ blocks.

 Let  $K_{s,t}=K_{m_r, n_r}$ be a leaf block and $H$ be the induced subgraph of $G$ on $\mathbf{m}$ vertices with blocks $K_{m_i, n_i}$ for $1\leq i \leq r-1.$ Under the notations defined in Section~\ref{sec:Notations}, $\widehat{\mathscr{F}}$ denote the exponential distance matrix of $H$ and  the exponential distance matrix $\mathscr{F}$ of $G$ can be written as in Eqn.~\eqref{eqn:F}. Thus,  $\mathbf{n} = \mathbf{m} + (s+t-1)$. Applying  induction hypothesis on $H$, we have 
 \begin{equation}\label{eqn:det-f-hat}
	\det(\widehat{\mathscr{F}}) = (1-q^2)^{\mathbf{m}-1} \prod_{i=1}^{r-1} \left[1-q^2(m_i-1)(n_i-1)\right].
 \end{equation}
From Lemma~\ref{eqn:LF}, $\det \mathscr{F}= \det (L\mathscr{F}),$ where $L\mathscr{F}$ is the upper triangular block matrix defined in Eqn.~\eqref{eqn:LF}. Using Proposition~\ref{prop:blockdet} and Eqn.~\eqref{eqn:LF}, we have
\begin{align*}
	\det(\mathscr{F}) &= \det(L\mathscr{F})\\ 
	                  &= \det(\hat{\mathscr{F}}) \times \det\Big[ (1-q^2)(I_{s-1}+ q^2 J_{s-1})\Big] \times \det \left[(1-q^2)I_{t} + \dfrac{q^2(q^2-1)(s-1)}{q^2(s-1)+1}J_t  \right].
\end{align*}
Using Lemma~\ref{lem:aI+bJ}, we have 
\begin{align*}
& \det\Big[ (1-q^2)(I_{s-1}+ q^2 J_{s-1})\Big]= (1-q^2)^{s-1} [q^2(s-1)+1]\\
\mbox{ and } & \det \left[(1-q^2)I_{t} + \dfrac{q^2(q^2-1)(s-1)}{q^2(s-1)+1}J_t \right]= (1-q^2)^t \left[\frac{1-q^2(s-1)(t-1)}{q^2(s-1)+1} \right]
\end{align*}
Thus, using Eqn.~\eqref{eqn:det-f-hat}, we get
	\begin{equation*}
	\begin{split}
		\det(\mathscr{F}) & = \det(\widehat{\mathscr{F}}) \times (1-q^2)^{s-1}  [q^2(s-1)+1] \times (1-q^2)^t \left[\frac{1-q^2(s-1)(t-1)}{q^2(s-1)+1} \right]\\
		&= \det(\widehat{\mathscr{F}}) \times (1-q^2)^{s+t-1}  \left[1-q^2(s-1)(t-1) \right]\\
		&= \left[(1-q^2)^{\mathbf{m}-1} \prod_{i=1}^{r-1} \left[1-q^2(m_i-1)(n_i-1)\right] \right] \times (1-q^2)^{s+t-1}  \left[1-q^2(s-1)(t-1) \right]\\
		&= (1-q^2)^{\mathbf{m} + (s+t-1)}  \left[1-q^2(s-1)(t-1)\right] \prod_{i=1}^{r} \left[1-q^2(m_i-1)(n_i-1)\right].
	\end{split}
	\end{equation*}
Since $\mathbf{n} = \mathbf{m} + (s+t-1)$ and $K_{m_r,n_r} = K_{s,t}$, we get
$$\det(\mathscr{F}) = (1-q^2)^{\mathbf{n}-1} \prod_{i=1}^{r} \left[1-q^2(m_i-1)(n_i-1)\right].$$
This completes the proof.
\end{proof}

\section{Inverse}

In this section, we will find the inverse of the exponential matrix for bi-block graphs. We begin with a lemma that compute the inverse of the exponential matrix of a complete bipartite graph.

\begin{lem}\label{lem:F-1-inv}
	Let $\mathscr{F}$ be the exponential distance matrix of complete bipartite graph $K_{s,t}$. If  $q$ is a nonzero indeterminate such that $q\neq \pm 1 $ and $q^2(s-1)(t-1)\neq 1$, then	
	$$\mathscr{F}^{-1} = \frac{q}{(1-q^2) \left[1-q^2(s-1)(t-1)\right]}  \begin{bmatrix}
		q(t-1)J_s & -J_{s \times t}\\
		- J_{t \times s} & q(s-1)J_t 
	\end{bmatrix} + \frac{1}{1-q^2} I.$$ 
\end{lem}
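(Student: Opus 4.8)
The plan is to apply the block-inversion formula from Proposition~\ref{prop:schur_det_inv} to the partition
\[
\mathscr{F} = \begin{bmatrix} M_{11} & M_{12}\\ M_{21} & M_{22}\end{bmatrix},
\]
with $M_{11} = (1-q^2)I_s + q^2 J_s$, $M_{12} = qJ_{s\times t}$, $M_{21} = qJ_{t\times s}$, and $M_{22} = (1-q^2)I_t + q^2 J_t$. This route is attractive because the two most expensive ingredients have already been produced inside the proof of Lemma~\ref{lem:F-1-det}: the inverse $M_{11}^{-1} = \frac{1}{1-q^2}\left[I_s - \frac{q^2}{q^2(s-1)+1}J_s\right]$ from Eqn.~\eqref{eqn:M11_inv}, and the Schur complement $\mathscr{F}/M_{11} = (1-q^2)I_t - \frac{q^2(1-q^2)(s-1)}{q^2(s-1)+1}J_t$ from Eqn.~\eqref{eqn:shur_comp}. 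The hypotheses $q\neq\pm1$ and $q^2(s-1)(t-1)\neq1$ are exactly what guarantee, via Lemma~\ref{lem:aI+bJ}, that both $M_{11}$ and $\mathscr{F}/M_{11}$ are invertible, so the formula applies.

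First I would invert the Schur complement. Writing $\mathscr{F}/M_{11}$ in the form $aI_t + bJ_t$ and applying Lemma~\ref{lem:aI+bJ}, the scalar $a + tb$ collapses to $(1-q^2)\frac{1-q^2(s-1)(t-1)}{q^2(s-1)+1}$, which yields
\[
(\mathscr{F}/M_{11})^{-1} = \frac{1}{1-q^2}\left[I_t + \frac{q^2(s-1)}{1-q^2(s-1)(t-1)}J_t\right];
\]
this is already the claimed bottom-right block. The remaining blocks follow from Proposition~\ref{prop:schur_det_inv} together with the all-ones algebra $J_{s\times t}J_t = tJ_{s\times t}$, $J_s J_{s\times t} = sJ_{s\times t}$, and $J_{s\times t}J_{t\times s} = tJ_s$. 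A convenient intermediate quantity is $M_{11}^{-1}M_{12} = \frac{q}{q^2(s-1)+1}J_{s\times t}$, whose simplification rests on $q^2(s-1)+1 - q^2 s = 1-q^2$; transposing (using that $M_{11}^{-1}$ is symmetric and $M_{12}^T = M_{21}$) gives $M_{21}M_{11}^{-1} = \frac{q}{q^2(s-1)+1}J_{t\times s}$. Substituting these produces the off-diagonal block $-M_{11}^{-1}M_{12}(\mathscr{F}/M_{11})^{-1} = -\frac{q}{(1-q^2)[1-q^2(s-1)(t-1)]}J_{s\times t}$, matching the target.

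The step I expect to carry the real weight is the top-left block $M_{11}^{-1} + M_{11}^{-1}M_{12}(\mathscr{F}/M_{11})^{-1}M_{21}M_{11}^{-1}$, where two $J_s$ contributions — one from $M_{11}^{-1}$ itself and one from the rank-one correction — must be merged. After reducing the correction term to a scalar multiple of $J_s$, the coefficient of $J_s$ becomes $\frac{q^2}{(1-q^2)(q^2(s-1)+1)}\cdot\frac{q^2(s-1)(t-1)+t-1}{1-q^2(s-1)(t-1)}$, and the decisive cancellation is the factorization $q^2(s-1)(t-1)+(t-1) = (t-1)(q^2(s-1)+1)$, which removes the stray $q^2(s-1)+1$ in the denominator and leaves exactly $\frac{q^2(t-1)}{(1-q^2)[1-q^2(s-1)(t-1)]}J_s$, while the $I_s$ part comes along unchanged from $M_{11}^{-1}$. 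Assembling the four blocks then gives the stated formula. As a sanity check, one could instead verify the identity directly by multiplying $\mathscr{F}$ against the claimed inverse and confirming that each of the four resulting blocks reduces to the corresponding block of $I_{s+t}$ via the same $J$-identities; this bypasses the Schur machinery but requires essentially the same cancellations.
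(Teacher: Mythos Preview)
Your proposal is correct and follows essentially the same route as the paper's own proof: both apply the Schur-complement block-inversion formula of Proposition~\ref{prop:schur_det_inv} to the same $2\times 2$ partition of $\mathscr{F}$, reuse $M_{11}^{-1}$ and $\mathscr{F}/M_{11}$ from the proof of Lemma~\ref{lem:F-1-det}, invert the Schur complement via Lemma~\ref{lem:aI+bJ}, and carry out the same $J$-algebra cancellations (including the key factorization $(t-1)(q^2(s-1)+1)$ in the top-left block). The only cosmetic difference is that you obtain $M_{21}M_{11}^{-1}$ by transposing $M_{11}^{-1}M_{12}$, whereas the paper computes it separately.
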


\begin{proof}
The exponential distance matrix of $K_{s,t}$ has the following block form:
$$ \mathscr{F}= \begin{bmatrix}
		 (1-q^2) I_{s}+q^2 J_{s}  & q J_{s\times t}\\
		q J_{t \times s} &  (1-q^2) I_{t}+q^2 J_{t} 
	\end{bmatrix}= \left[
\begin{array}{c|c}
M_{11}& M_{12} \\
\midrule
M_{21} &M_{22}
\end{array}\right].	$$
From Eqn.~\eqref{eqn:M11_inv}, we have $M_{11}^{-1}= \dfrac{1}{1-q^2}\left[ I_s- \dfrac{q^2}{q^2(s-1)+1}J_s\right].$ Then, it is easy to verify that 
\begin{equation}\label{eqn:eq1}
M_{21}M_{11}^{-1}=\dfrac{q}{q^2(s-1)+1}   J_{t\times s} \mbox{ and }M_{11}^{-1}M_{12}=\dfrac{q}{q^2(s-1)+1} J_{s\times t}.
\end{equation}
From Eqn.~\eqref{eqn:shur_comp}, we have 
$\mathscr{F}/M_{11}=(1-q^2) I_{t} -  \dfrac{q^2(1-q^2)(s-1)}{q^2(s-1)+1}J_t$ and hence using Lemma~\ref{lem:aI+bJ}, we get 
\begin{equation}\label{eqn:eq2}
(\mathscr{F}/M_{11})^{-1}=\dfrac{1}{1-q^2}\left[I_t-\dfrac{q^2(s-1)}{1-q^2(s-1)(t-1)}J_t\right]
\end{equation}
Therefore, using Eqns.~\eqref{eqn:eq1} and~\eqref{eqn:eq2}, we have
\begin{align}\label{eqn:eq3}
(\mathscr{F}/M_{11})^{-1}M_{21}M_{11}^{-1}&= \dfrac{q}{(1-q^2)(q^2(s-1)+1)}\left[I_t-\dfrac{q^2(s-1)}{1-q^2(s-1)(t-1)}J_t\right]  J_{t\times s} \nonumber\\
&= \dfrac{q}{(1-q^2)(q^2(s-1)+1)}\left[1-\dfrac{q^2t(s-1)}{1-q^2(s-1)(t-1)}\right]  J_{t\times s} \nonumber\\
&= \dfrac{q}{(1-q^2)[1-q^2(s-1)(t-1)]} J_{t\times s},
\end{align}
and 
\begin{align}\label{eqn:eq4}
M_{11}^{-1}M_{12}(\mathscr{F}/M_{11})^{-1}&= \dfrac{q}{(1-q^2)(q^2(s-1)+1)}J_{s\times t}\left[I_t-\dfrac{q^2(s-1)}{1-q^2(s-1)(t-1)}J_t\right]\nonumber\\
&= \dfrac{q}{(1-q^2)(q^2(s-1)+1)}\left[1-\dfrac{q^2t(s-1)}{1-q^2(s-1)(t-1)}\right] J_{s\times t} \nonumber\\
&= \dfrac{q}{(1-q^2)[1-q^2(s-1)(t-1)]} J_{s\times t}.
\end{align}
Finally, using above calculations and Eqns.~\eqref{eqn:eq1}, we get 
\begin{align*}
M_{11}^{-1}M_{12}(\mathcal{F}/M_{11})^{-1}M_{21}M_{11}^{-1}&=\left[\dfrac{q}{(1-q^2)[1-q^2(s-1)(t-1)]} J_{s\times t}\right]\left[\dfrac{q}{q^2(s-1)+1}   J_{t\times s}\right]\\
&=\left[\dfrac{q^2t}{(1-q^2)[1-q^2(s-1)(t-1)](q^2(s-1)+1)} \right]J_s,
\end{align*}
and hence
\begin{align}\label{eqn:eq5}
& M_{11}^{-1}+M_{11}^{-1}M_{12}(\mathscr{F}/M_{11})^{-1}M_{21}M_{11}^{-1} \nonumber\\=& \dfrac{1}{1-q^2}\left[ I_s- \dfrac{q^2}{q^2(s-1)+1}J_s\right]
 +\left[\dfrac{q^2t}{(1-q^2)[1-q^2(s-1)(t-1)](q^2(s-1)+1)} \right]J_s \nonumber\\
 =& \dfrac{1}{1-q^2} I_s - \dfrac{q^2}{(1-q^2)(q^2(s-1)+1)}\left[\dfrac{t}{1-q^2(s-1)(t-1)}-1\right]J_s \nonumber\\
 =& \dfrac{1}{1-q^2} I_s - \dfrac{q^2(t-1)}{(1-q^2)[1-q^2(s-1)(t-1)]}J_s
\end{align}
In view of Eqns.~\eqref{eqn:eq2} -~\eqref{eqn:eq5}, the result follows from Proposition~\ref{prop:schur_det_inv}. 
\end{proof}

We now ready to compute the inverse of the exponential matrix of a bi-block graph whenever it exists.

\begin{theorem}\label{thm:inv}
Let $G$ be a bi-block graph with blocks $K_{m_i,n_i}$ for $i=1,2,\ldots, r$  on $\mathbf{n}=\sum_{i=1}^r (m_i+n_i)-(r-1)$ vertices. Let $q$ be a non-zero indeterminate and $\mathscr{F}$ be the exponential distance matrix of $G$. If  $\mathscr{F}$ is invertible, then
	\[
	\mathscr{F}^{-1} = \frac{1}{1-q^2} I_{\mathbf{n}} - \frac{q}{1-q^2}\mathbf{A} +\frac{q^2}{1-q^2} \mathbf{B}  +\frac{q^2}{1-q^2} \textup{diag}(\mu),
		\]
where 	$\mathbf{A}, \mathbf{B}$ and $\mu$ as defined in Eqns.~\eqref{eqn:defn-A} -~\eqref{eqn:defn-mu}.
\end{theorem}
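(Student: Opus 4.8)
The plan is to argue by induction on the number of blocks $r$, mirroring the strategy used for the determinant. For the base case $r=1$ the graph is a single complete bipartite graph $K_{s,t}$, and the statement is essentially Lemma~\ref{lem:F-1-inv}; here I would only need to check that for a single block the matrices $\mathbf{A}$, $\mathbf{B}$ and $\textup{diag}(\mu)$ of Eqns.~\eqref{eqn:defn-A}--\eqref{eqn:defn-mu} reproduce the closed form of Lemma~\ref{lem:F-1-inv}, which is a direct entrywise comparison. Throughout, the hypothesis that $\mathscr{F}$ is invertible, combined with the determinant formula established above, guarantees that each factor $1-q^2(m_i-1)(n_i-1)$ is nonzero, so that $\widehat{\mathscr{F}}$ and the Schur complement below are invertible as well.

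For the inductive step, write $G = H\circledcirc K_{s,t}$ with $K_{s,t}=K_{m_r,n_r}$ a leaf block attached at the cut vertex $v_{\mathbf{m}}$ (without loss of generality $v_{\mathbf{m}}\in X_r$, the case $v_{\mathbf{m}}\in Y_r$ being symmetric). I would partition $\mathscr{F}$ as in Eqn.~\eqref{eqn:F} into a $2\times 2$ block matrix with $M_{11}=\widehat{\mathscr{F}}$ (the $\mathbf{m}\times\mathbf{m}$ corner) and $M_{22}$ the $(s-1+t)\times(s-1+t)$ corner indexed by the non-cut vertices of the new block, and apply the Schur-complement inverse formula of Proposition~\ref{prop:schur_det_inv}. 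The crucial observation that makes everything explicit is that the coupling block factors through $\widehat{\mathscr{F}}$: since $M_{12}=\widehat{\mathscr{F}}\,[\,q^2\mathbf{E}_1 \mid q\mathbf{E}_2\,]$, one has $\widehat{\mathscr{F}}^{-1}M_{12}=[\,q^2\mathbf{E}_1 \mid q\mathbf{E}_2\,]$, so the unknown $\widehat{\mathscr{F}}^{-1}$ cancels and $M_{21}\widehat{\mathscr{F}}^{-1}M_{12}$ collapses to all-ones blocks via the identities $\mathbf{E}_i^t\widehat{\mathscr{F}}\mathbf{E}_j=J$ of Proposition~\ref{prop:id1}(a),(b). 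Consequently the Schur complement $S=M_{22}-M_{21}\widehat{\mathscr{F}}^{-1}M_{12}$ is free of $\widehat{\mathscr{F}}$ and equals $(1-q^2)$ times a matrix of the form $\bigl[\begin{smallmatrix} I_{s-1}+q^2J_{s-1} & qJ_{(s-1)\times t}\\ qJ_{t\times(s-1)} & I_t\end{smallmatrix}\bigr]$, which I would invert by a second Schur complement relative to the trivial corner $I_t$ together with Lemma~\ref{lem:aI+bJ}.

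With $S^{-1}$ in hand, I would assemble the four blocks of $\mathscr{F}^{-1}$. The block $S^{-1}$ itself reproduces exactly the portion of $\tfrac{1}{1-q^2}I-\tfrac{q}{1-q^2}\mathbf{A}+\tfrac{q^2}{1-q^2}\mathbf{B}+\tfrac{q^2}{1-q^2}\textup{diag}(\mu)$ indexed by the non-cut vertices of $K_{s,t}$: its diagonals recover the values $\tfrac{t-1}{1-q^2(s-1)(t-1)}$ and $\tfrac{s-1}{1-q^2(s-1)(t-1)}$ of $\mu$ on $X_r$ and $Y_r$, its within-part off-diagonals recover the complement-adjacency weights in $\mathbf{B}$, and its cross-part entries recover the weighted adjacency in $\mathbf{A}$. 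The two coupling blocks $-\widehat{\mathscr{F}}^{-1}M_{12}S^{-1}=-[\,q^2\mathbf{E}_1 \mid q\mathbf{E}_2\,]S^{-1}$ are supported only on the $v_{\mathbf{m}}$-row and column (since $\mathbf{E}_1,\mathbf{E}_2$ have a single nonzero row), and a short computation using Proposition~\ref{prop:id1}(c),(d) shows their entries equal $\tfrac{q^2}{1-q^2}\cdot\tfrac{t-1}{1-q^2(s-1)(t-1)}$ toward $X_r$ and $-\tfrac{q}{1-q^2}\cdot\tfrac{1}{1-q^2(s-1)(t-1)}$ toward $Y_r$, which are precisely the $\mathbf{B}$- and $\mathbf{A}$-entries linking $v_{\mathbf{m}}$ to the new block.

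Finally, the leading block is $\widehat{\mathscr{F}}^{-1}+\widehat{\mathscr{F}}^{-1}M_{12}S^{-1}M_{21}\widehat{\mathscr{F}}^{-1}$, where by the induction hypothesis $\widehat{\mathscr{F}}^{-1}$ already has the claimed form for $H$. Using Proposition~\ref{prop:id1}(e), the correction term $[\,q^2\mathbf{E}_1 \mid q\mathbf{E}_2\,]S^{-1}[\,q^2\mathbf{E}_1 \mid q\mathbf{E}_2\,]^t$ is a scalar multiple of $\mathbf{E}_{\mathbf{m}\mathbf{m}}$, hence perturbs only the $(v_{\mathbf{m}},v_{\mathbf{m}})$ diagonal entry; I would verify that this scalar equals $\tfrac{q^2}{1-q^2}\bigl(\tfrac{t-1}{1-q^2(s-1)(t-1)}+1\bigr)$, which is exactly the amount by which the $\mu$-value at $v_{\mathbf{m}}$ increases when $K_{s,t}$ is attached (the $\tfrac{t-1}{1-q^2(s-1)(t-1)}$ coming from the new block's contribution to the sum in Eqn.~\eqref{eqn:defn-mu}, and the $+1$ from the increment of its block index). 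This cut-vertex bookkeeping is the main obstacle: everything else is a mechanical matching of entries, but one must track carefully how $\widehat{\mu}(v_{\mathbf{m}})$ updates to $\mu(v_{\mathbf{m}})$ and confirm the precise signs of the coupling entries at $v_{\mathbf{m}}$. Combining the four assembled blocks and invoking the induction hypothesis then yields the stated expression for $\mathscr{F}^{-1}$.
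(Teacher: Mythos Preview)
Your proposal is correct and shares with the paper the same inductive skeleton and the same key observation, namely that the coupling block factors as $M_{12}=\widehat{\mathscr{F}}\,[\,q^2\mathbf{E}_1\mid q\mathbf{E}_2\,]$, so that $\widehat{\mathscr{F}}^{-1}$ cancels in all cross terms and the identities of Proposition~\ref{prop:id1} reduce everything to $J$-blocks. The organization, however, differs: the paper keeps the $3\times 3$ partition of Eqn.~\eqref{eqn:F}, writes $\mathscr{F}\mathscr{F}^{-1}=I$ as a system of nine block equations in the unknown blocks $A_{ij}$ of $\mathscr{F}^{-1}$, and solves them sequentially (first eliminating $A_{1j}$ via $\widehat{\mathscr{F}}^{-1}$, then solving $2\times 2$ linear systems for $(A_{22},A_{32})$ and $(A_{23},A_{33})$, and finally back-substituting for $A_{11}$). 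You instead collapse the non-cut vertices of the leaf block into a single $(s-1+t)$-dimensional piece, apply the Schur complement inverse formula of Proposition~\ref{prop:schur_det_inv} once, and handle the resulting $S$ by a nested Schur complement with trivial corner $I_t$. Your route is somewhat cleaner since it invokes Proposition~\ref{prop:schur_det_inv} as a black box rather than rederiving block elimination by hand; the paper's $3\times 3$ treatment, on the other hand, delivers each final block entry in isolation, which dovetails more directly with the subsequent cofactor-sum computation in Section~5.
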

\begin{proof}
We will prove this result using induction on the number of blocks $r$ in $G$. If $r=1$, then $G$ is a complete bipartite graph. Let $G=K_{s,t}$ with the vertex partition $X\cup Y$ such that $|X|=s$ and $|Y|=t$. Then, $\mathbf{n}=s+t$ and using  Eqns.~\eqref{eqn:defn-A} -~\eqref{eqn:defn-mu}, we get

	$$
	\mathbf{A} = \dfrac{1}{1-q^2(s-1)(t-1)}\left[
\begin{array}{c|c}
		\mathbf{0}_{s} &  J_{s\times t}\\ \midrule
		J_{t \times s} & \mathbf{0}_t
	\end{array} \right],
	$$
	$$
	\mathbf{B} = \dfrac{1}{1-q^2(s-1)(t-1)} \left[
\begin{array}{c|c}
		(t-1)(J_{s} - I_{s}) &  \mathbf{0}_{s\times t}\\ \midrule
		\mathbf{0}_{t \times s} & (s-1)(J_{t} - I_{t})
	\end{array} \right],
	$$
	and
	\[
	\mu(v) = \begin{cases}
		\dfrac{t-1}{1-q^2(s-1)(t-1)}, 	& \text{ if } v \in X,\\
		\dfrac{s-1}{1-q^2(s-1)(t-1)},	& \text{ if } v \in Y.
	\end{cases}
	\]
Therefore, using the above equations and Lemma~\ref{lem:F-1-inv}, we get
\begin{align*}
& \ \frac{1}{1-q^2} I_{\mathbf{n}} - \frac{q}{1-q^2}\mathbf{A} +\frac{q^2}{1-q^2} \mathbf{B}  +\frac{q^2}{1-q^2} \textup{diag}(\mu)\\
=& \frac{1}{1-q^2} I_{\mathbf{n}} - \dfrac{q}{(1-q^2)[1-q^2(s-1)(t-1)]}\left[
\begin{array}{c|c}
		\mathbf{0}_{s} &  J_{s\times t}\\ \midrule
		J_{t \times s} & \mathbf{0}_t
	\end{array} \right]\\
	 & \quad + \dfrac{q^2}{(1-q^2)[1-q^2(s-1)(t-1)]} \left[
\begin{array}{c|c}
		(t-1)(J_{s} - I_{s}) &  \mathbf{0}_{s\times t}\\ \midrule
		\mathbf{0}_{t \times s} & (s-1)(J_{t} - I_{t})
	\end{array} \right]\\
	& \quad  + \dfrac{q^2}{(1-q^2)[1-q^2(s-1)(t-1)]}  \left[ \begin{array}{c|c}
		(t-1) I_{s} &  \mathbf{0}_{s\times t}\\ \midrule
		\mathbf{0}_{t \times s} & (s-1)I_{t}
	\end{array} \right]\\
	=&  \frac{1}{1-q^2} I_{\mathbf{n}}+ \frac{q}{(1-q^2) \left[1-q^2(s-1)(t-1)\right]}  \left[
\begin{array}{c|c}
q(t-1)J_s & -J_{s \times t}\\ \midrule
		- J_{t \times s} & q(s-1)J_t 
		\end{array} \right]
		= \mathscr{F}^{-1}
\end{align*}
This proves the result hold true for $r=1.$  Let us assume that the result is true for bi-block graphs with $r-1$ blocks.

Let $G$ be a bi-block graph with blocks $K_{m_i,n_i}$ for $i=1,2,\ldots, r$  on $\mathbf{n}=\sum_{i=1}^r (m_i+n_i)-(r-1)$ vertices. Without loss generality, let  $K_{s,t}=K_{m_r, n_r}$ be a leaf block and $H$ be the induced subgraph of $G$ on $\mathbf{m}$ vertices with blocks $K_{m_i, n_i}$ for $1\leq i \leq r-1.$ Recall that, the notations defined in Section~\ref{sec:Notations}, we will use   the notations $\mathscr{F}, \mathbf{A}, \mathbf{B}$, and $\mu$ for the bi-block graph $G$, and the corresponding notations $\widehat{\mathscr{F}}, \widehat{\mathbf{A}}, \widehat{\mathbf{B}}$, and $\widehat{\mu}$ for the subgraph $H$. Then, the exponential distance matrix $\mathscr{F}$ of $G$ can be written as in Eqn.~\eqref{eqn:F} and  $\mathbf{n} = \mathbf{m} + (s+t-1)$. Applying  induction hypothesis on $H$, we have 
\begin{equation}
\widehat{\mathscr{F}}^{-1} = \frac{1}{1-q^2} I_{\mathbf{m}} - \frac{q}{1-q^2}\widehat{\mathbf{A}} +\frac{q^2}{1-q^2} \widehat{\mathbf{B}}  +\frac{q^2}{1-q^2} \textup{diag}(\widehat{\mu)}.
\end{equation}
Let us partition the $\mathscr{F}^{-1}$ into block matrix conformal to the block matrix form of $\mathscr{F}$ given in~Eqn.~\eqref{eqn:F}, \textit{i.e.}
	\[
	\mathscr{F}^{-1} = \begin{bmatrix}
		A_{11} & A_{12} & A_{13}\\
		A_{21} & A_{22} & A_{23}\\
		A_{31} & A_{32} & A_{33}
	\end{bmatrix},
	\]
where $A_{21} = A_{12}^t, A_{31} = A_{13}^t$ and $A_{32} = A_{23}^t$. That is,
\begin{equation}\label{eqn:prod}
\left[
\begin{array}{c|c|c}
	\widehat{\mathscr{F}} & q^2 \widehat{\mathscr{F}} \mathbf{E}_1 & q \widehat{\mathscr{F}} \mathbf{E}_2\\
	\midrule
	q^2 \mathbf{E}_1^t \widehat{\mathscr{F}} & (1-q^2) I_{s-1}+q^2 J_{s-1}  & q J_{(s-1)\times t}\\
	\midrule
	q \mathbf{E}_2^t \widehat{\mathscr{F}} & q J_{t \times (s-1)} &  (1-q^2) I_{t}+ q^2 J_{t} 
\end{array}\right]\begin{bmatrix}
	A_{11} & A_{12} & A_{13}\\
	A_{21} & A_{22} & A_{23}\\
	A_{31} & A_{32} & A_{33}
	\end{bmatrix} =  \begin{bmatrix}
	I_{\mathbf{m}} & \mathbf{0} & \mathbf{0}\\
	\mathbf{0} & I_{s-1} & \mathbf{0}\\
	\mathbf{0} & \mathbf{0} & I_{t}
	\end{bmatrix}.
\end{equation}
Considering the multiplication of first row $\mathscr{F}$ with $\mathscr{F}^{-1}$, we get	
	\begin{align*}
	& \widehat{\mathscr{F}}A_{11} + q^2 \hat{\mathscr{F}} \mathbf{E}_1 A_{21} + q \widehat{\mathscr{F}}  \mathbf{E}_2 A_{31} = I_{\mathbf{m}},\\	
	& 	\widehat{\mathscr{F}} A_{12} + q^2 \widehat{\mathscr{F}}  \mathbf{E}_1 A_{22} + q \widehat{\mathscr{F}}  \mathbf{E}_2 A_{32} = \mathbf{0}_{\mathbf{m} \times (s-1)},\\	
	& 	\widehat{\mathscr{F}} A_{13} + q^2 \widehat{\mathscr{F}}  \mathbf{E}_1 A_{23} + q \widehat{\mathscr{F}}  \mathbf{E}_2 A_{33} = \mathbf{0}_{\mathbf{m} \times t}.
		\end{align*}
Since $\widehat{\mathscr{F}}^{-1}$ is invertible, the above equations reduces to
	\begin{align}\label{eqn:A1j}
	\begin{cases}
	\vspace*{.1cm}
	& A_{11} = \widehat{\mathscr{F}}^{-1} - q^2  \mathbf{E}_1 A_{21} - q   \mathbf{E}_2 A_{31} ,\\	
	\vspace*{.1cm}
	& 	 A_{12}  = - q^2 \mathbf{E}_1 A_{22} - q   \mathbf{E}_2 A_{32},\\	
		& 	A_{13}  = - q^2  \mathbf{E}_1 A_{23} - q   \mathbf{E}_2 A_{33}. 
	\end{cases}
	\end{align}	
In Eqn.~\eqref{eqn:prod}, considering the second row of $\mathscr{F}$	and the second column of $\mathscr{F}^{-1}$, we get
$$I_{s-1}=q^2 \mathbf{E}_1^t \widehat{\mathscr{F}} A_{12}+ [(1-q^2) I_{s-1}+q^2 J_{s-1}] A_{22} +  q J_{(s-1)\times t} A_{32}. $$
Substituting $A_{12}$ from Eqn.~\eqref{eqn:A1j}, and using the identities  $\mathbf{E}_1^t \widehat{\mathscr{F}}\mathbf{E}_1=J_{s-1}$ and $ \mathbf{E}_1^t  \widehat{\mathscr{F}} \mathbf{E}_2=J_{(s-1)\times t}$ from Proposition~\ref{prop:id1}, we have
\begin{align*}
I_{s-1} &= q^2 \mathbf{E}_1^t \widehat{\mathscr{F}} [- q^2 \mathbf{E}_1 A_{22} - q   \mathbf{E}_2 A_{32}]+ [(1-q^2) I_{s-1}+q^2 J_{s-1}] A_{22} +  q J_{(s-1)\times t} A_{32}\\
&= -q^4 \mathbf{E}_1^t \widehat{\mathscr{F}}\mathbf{E}_1 A_{22} - q^3 \mathbf{E}_1^t  \widehat{\mathscr{F}} \mathbf{E}_2 A_{32} + [(1-q^2) I_{s-1}+q^2 J_{s-1}] A_{22} +  q J_{(s-1)\times t} A_{32}\\
&= -q^4 J_{s-1} A_{22} - q^3 J_{(s-1)\times t} A_{32} + [(1-q^2) I_{s-1}+q^2 J_{s-1}] A_{22} +  q J_{(s-1)\times t} A_{32},
\end{align*} 
and hence $(1-q^2)[I_{s-1}+q^2J_{s-1}] A_{22} = I_{s-1}-q(1-q^2)J_{(s-1)\times t} A_{32}$. Uisng  Lemma~\ref{lem:aI+bJ}, we have $[I_{s-1}+q^2J_{s-1}]^{-1}= I_{s-1} -\dfrac{q^2}{q^2(s-1)+1}J_{s-1}$ which implies that
\begin{align}\label{eqn:A22-1}
A_{22} &= \dfrac{1}{1-q^2}\left[I_{s-1} -\dfrac{q^2}{q^2(s-1)+1}J_{s-1}\right]\left[I_{s-1}-q(1-q^2)J_{(s-1)\times t} A_{32}\right] \nonumber\\ 
&= \dfrac{1}{1-q^2}\left[I_{s-1}- \dfrac{q^2}{q^2(s-1)+1}J_{s-1} -\dfrac{q(1-q^2)}{q^2(s-1)+1} J_{(s-1)\times t} A_{32} \right].
\end{align}
In Eqn.~\eqref{eqn:prod}, considering the third column of $\mathscr{F}$ and the second column of $\mathscr{F}^{-1}$, we get
$$ \mathbf{0}_{t \times (s-1)}= q \mathbf{E}_2^t \widehat{\mathscr{F}} A_{12} + q J_{t \times (s-1)} A_{22} + [(1-q^2) I_{t}+ q^2 J_{t}] A_{32}.$$
Substituting $A_{12}$ from Eqn.~\eqref{eqn:A1j}, and using  the identities $\mathbf{E}_2^t \widehat{\mathscr{F}}\mathbf{E}_1=J_{t\times (s-1)}$ and $ \mathbf{E}_2^t  \widehat{\mathscr{F}} \mathbf{E}_2=J_{t}$ from Proposition~\ref{prop:id1}, we have
\begin{align*}
\mathbf{0}_{t \times (s-1)}&= q \mathbf{E}_2^t \widehat{\mathscr{F}} [- q^2 \mathbf{E}_1 A_{22} - q   \mathbf{E}_2 A_{32}] + q J_{t \times (s-1)} A_{22} + [(1-q^2) I_{t}+ q^2 J_{t}] A_{32}\\
&= -  q^3 \mathbf{E}_2^t \widehat{\mathscr{F}}  \mathbf{E}_1 A_{22} -  q^2 \mathbf{E}_2^t \widehat{\mathscr{F}} \mathbf{E}_2 A_{32} + q J_{t \times (s-1)} A_{22} + [(1-q^2) I_{t}+ q^2 J_{t}] A_{32}\\
&= -  q^3 J_{t\times (s-1)} A_{22} -  q^2 J_{t} A_{32} + q J_{t \times (s-1)} A_{22} + [(1-q^2) I_{t}+ q^2 J_{t}] A_{32},
\end{align*}
and hence 
\begin{equation}\label{eqn:A32-1}
A_{32}=-q J_{t\times (s-1)}A_{22}.
\end{equation}
Substituting $A_{32}$ from Eqn.~\eqref{eqn:A32-1} in Eqn.~\eqref{eqn:A22-1}, we get
\begin{align*}
A_{22} &=\dfrac{1}{1-q^2}\left[I_{s-1}- \dfrac{q^2}{q^2(s-1)+1}J_{s-1} -\dfrac{q(1-q^2)}{q^2(s-1)+1} J_{(s-1)\times t} \left(-qJ_{t\times (s-1)}A_{22}\right) \right]\\
&=\dfrac{1}{1-q^2}\left[I_{s-1}- \dfrac{q^2}{q^2(s-1)+1}J_{s-1} +\dfrac{q^2(1-q^2)t}{q^2(s-1)+1} J_{(s-1)} A_{22} \right],
\end{align*}
which implies that $\ds A_{22}\left[ I_{s-1}-\dfrac{q^2t}{q^2(s-1)+1} J_{(s-1)}\right]= \dfrac{1}{1-q^2}\left[I_{s-1}- \dfrac{q^2}{q^2(s-1)+1}J_{s-1}\right].$ From Lemma~\ref{lem:aI+bJ}, $\ds \left[ I_{s-1}-\dfrac{q^2t}{q^2(s-1)+1} J_{(s-1)}\right]^{-1}= I_{s-1}+ \frac{q^2t}{1-q^2(s-1)(t-1)}$, and hence
\begin{align}\label{eqn:A22-12}
A_{22}&= \dfrac{1}{1-q^2}\left[I_{s-1}- \dfrac{q^2}{q^2(s-1)+1}J_{s-1}\right]\left[I_{s-1}+ \frac{q^2t}{1-q^2(s-1)(t-1)}J_{s-1}\right] \nonumber\\
&=\dfrac{1}{1-q^2}\left[I_{s-1}+ \frac{q^2(t-1)}{1-q^2(s-1)(t-1)}J_{s-1}\right].
\end{align}
Thus, using Eqn.~\eqref{eqn:A22-12} in Eqn.~\eqref{eqn:A32-1}, we get
\begin{equation}\label{eqn:A32-12}
A_{32}=-\frac{q}{1-q^2}\left[\frac{1}{1-q^2(s-1)(t-1)}\right]J_{t \times (s-1)}.
\end{equation}
Substituting the expression of $A_{32}$ from Eqn.~\eqref{eqn:A32-1} in Eqn.~\eqref{eqn:A1j},  the expression of $A_{12}$ is given by
$\ds A_{12}  = - q^2 \mathbf{E}_1 A_{22} + q^2  \mathbf{E}_2 J_{t\times (s-1)}A_{22}$. Further, using the identities $\mathbf{E}_2 J_{t\times (s-1)}=t\mathbf{E}_1$ and $\mathbf{E}_1 J_{s-1}=(s-1)\mathbf{E}_1$ from Proposition~\ref{prop:id1}, and the expression of $A_{22}$ from Eqn.\eqref{eqn:A22-12}, we get
\begin{align}\label{eqn:A12-12}
A_{12}&=q^2(t-1)\mathbf{E}_1 A_{22} \nonumber\\
&= q^2(t-1)\mathbf{E}_1 \left[\dfrac{1}{1-q^2}\left[I_{s-1}+ \frac{q^2(t-1)}{1-q^2(s-1)(t-1)}J_{s-1}\right]\right]  \nonumber\\
&=\dfrac{q^2(t-1)}{1-q^2} \left[\mathbf{E}_1 + \frac{q^2(t-1)}{1-q^2(s-1)(t-1)}(s-1)\mathbf{E}_1\right] \nonumber\\
&= \dfrac{q^2}{1-q^2}\left[ \frac{(t-1)}{1-q^2(s-1)(t-1)}\right]\mathbf{E}_1.
\end{align}

We now consider the second row of $\mathscr{F}$ and the third column of $\mathscr{F}^{-1}$ in the product  Eqn.~\eqref{eqn:prod}, we get
$$\mathbf{0}_{(s-1) \times t}=q^2 \mathbf{E}_1^t \widehat{\mathscr{F}} A_{13}+ [(1-q^2) I_{s-1}+q^2 J_{s-1}] A_{23} +  q J_{(s-1)\times t} A_{33}. $$
Substituting $A_{13}$ from Eqn.~\eqref{eqn:A1j}, and using  the identities $\mathbf{E}_1^t \widehat{\mathscr{F}}\mathbf{E}_1=J_{s-1}$ and $ \mathbf{E}_1^t  \widehat{\mathscr{F}} \mathbf{E}_2=J_{(s-1)\times t}$ from Proposition~\ref{prop:id1}, we have
\begin{align*}
\mathbf{0}_{(s-1) \times t}&=q^2 \mathbf{E}_1^t \widehat{\mathscr{F}} [ - q^2  \mathbf{E}_1 A_{23} - q   \mathbf{E}_2 A_{33}]+ [(1-q^2) I_{s-1}+q^2 J_{s-1}] A_{23} +  q J_{(s-1)\times t} A_{33}\\
&=-q^4 \mathbf{E}_1^t \widehat{\mathscr{F}} \mathbf{E}_1 A_{23} - q^3 \mathbf{E}_1^t \widehat{\mathscr{F}}  \mathbf{E}_2 A_{33}+ [(1-q^2) I_{s-1}+q^2 J_{s-1}] A_{23} +  q J_{(s-1)\times t} A_{33}\\
&=-q^4J_{s-1} A_{23} - q^3 J_{(s-1)\times t} A_{33}+ [(1-q^2) I_{s-1}+q^2 J_{s-1}] A_{23} +  q J_{(s-1)\times t} A_{33}, 
\end{align*}
which implies that $[I_{s}+q^2J_{s-1}]A_{23}=-qJ_{s-1}A_{33}.$ From Lemma~\ref{lem:aI+bJ}, we have $\ds [I_{s-1}+q^2J_{s-1}]^{-1}=I_{s-1}-\dfrac{q^2}{q^2(s-1)+1}J_{s-1}$ and hence
\begin{align}\label{eqn:A23-1}
A_{23}&= -q \left[I_{s-1}-\dfrac{q^2}{q^2(s-1)+1}J_{s-1}\right] J_{s-1}A_{33} \nonumber\\
&=- \dfrac{q}{q^2(s-1)+1}J_{s-1}A_{33}.
\end{align}
In Eqn.~\eqref{eqn:prod}, considering the third row of $\mathscr{F}$ and the third column of $\mathscr{F}^{-1}$, we get
$$ I_{t}= q \mathbf{E}_2^t \widehat{\mathscr{F}} A_{13} + q J_{t \times (s-1)} A_{23} + [(1-q^2) I_{t}+ q^2 J_{t}] A_{33}.$$
Substituting $A_{13}$ from Eqn.~\eqref{eqn:A1j}, and using  the identities $\mathbf{E}_2^t \widehat{\mathscr{F}}\mathbf{E}_1=J_{t \times (s-1)}$ and $ \mathbf{E}_2^t  \widehat{\mathscr{F}} \mathbf{E}_2=J_{t}$ from Proposition~\ref{prop:id1}, we have
\begin{align*}
 I_{t}&= q \mathbf{E}_2^t \widehat{\mathscr{F}}[ - q^2  \mathbf{E}_1 A_{23} - q   \mathbf{E}_2 A_{33}] + q J_{t \times (s-1)} A_{23} + [(1-q^2) I_{t}+ q^2 J_{t}] A_{33}\\
 &= - q^3 \mathbf{E}_2^t \widehat{\mathscr{F}}  \mathbf{E}_1 A_{23} - q^2 \mathbf{E}_2^t  \widehat{\mathscr{F}} \mathbf{E}_2 A_{33}  + q J_{t \times (s-1)} A_{23} + [(1-q^2) I_{t}+ q^2 J_{t}] A_{33}\\
 &= - q^3 J_{t \times (s-1)} A_{23} - q^2 J_t A_{33}  + q J_{t \times (s-1)} A_{23} + [(1-q^2) I_{t}+ q^2 J_{t}] A_{33}\\
 &= (1-q^2)[q J_{t \times (s-1)}A_{23} +  A_{33}].
\end{align*}
Now substituting $A_{23}$ from Eqn.\eqref{eqn:A23-1}, the above equation gives 
\begin{align*}
I_{t}&= (1-q^2)\left[q J_{t \times (s-1)}\left[- \dfrac{q}{q^2(s-1)+1}J_{s-1}A_{33}\right] +  A_{33}\right]=  (1-q^2)\left[I_{t}-\dfrac{q^2(s-1)}{q^2(s-1)+1}J_t\right] A_{33}.
\end{align*}
From Lemma~\ref{lem:aI+bJ}, $\ds \left[I_{t}-\dfrac{q^2(s-1)}{q^2(s-1)+1}J_t\right]^{-1}= I_t + \dfrac{q^2(s-1)}{1-q^2(s-1)(t-1)}J_t$, and hence
\begin{equation}\label{eqn:A33-12}
A_{33}= \dfrac{1}{1-q^2}\left[ I_t + \dfrac{q^2(s-1)}{1-q^2(s-1)(t-1)}J_t \right].
\end{equation}
Since $A_{23}=A_{32}^t$, substituting $\ds A_{23}=-\frac{q}{1-q^2}\left[\frac{1}{1-q^2(s-1)(t-1)}\right]J_{(s-1)  \times t}$ using Eqn.~\eqref{eqn:A32-12} and $A_{33}$ from Eqn.~\eqref{eqn:A33-12} in the expression of $A_{13}$ from Eqn.~\eqref{eqn:A1j}, we get
$$A_{13}= - q^2  \mathbf{E}_1 \left[-\frac{q}{1-q^2}\left[\frac{1}{1-q^2(s-1)(t-1)}\right]J_{(s-1)  \times t}\right] - q   \mathbf{E}_2 \left[\dfrac{1}{1-q^2}\left[ I_t + \dfrac{q^2(s-1)}{1-q^2(s-1)(t-1)}J_t \right]\right].$$
Further, using  the identities $\mathbf{E}_1 J_{(s-1)\times t}=(s-1)\mathbf{E}_2$ and $\mathbf{E}_2 J_t= t \mathbf{E}_2$ from Proposition~\ref{prop:id1} gives
\begin{equation}\label{eqn:A13-12}
A_{13}= -\dfrac{q}{1-q^2}\left[\dfrac{1}{1-q^2(s-1)(t-1)}\right]\mathbf{E}_2.
\end{equation}
Since $A_{21}=A_{12}^t$ and $A_{31}=A_{13}^t$,  Eqns.~\eqref{eqn:A12-12} and~\eqref{eqn:A13-12} gives
$$A_{21}= \dfrac{1}{1-q^2}\left[ \frac{q^2(t-1)}{1-q^2(s-1)(t-1)}\right]\mathbf{E}_1^t \mbox{ and } A_{31}= -\dfrac{q}{1-q^2}\left[\dfrac{1}{1-q^2(s-1)(t-1)}\right]\mathbf{E}_2^t.$$ Then, using the identities  $\mathbf{E}_1  \mathbf{E}_1^t= (s-1) \mathbf{E}_{\mathbf{m} \mathbf{m}}$ and  $\mathbf{E}_2  \mathbf{E}_2^t= t \mathbf{E}_{\mathbf{m} \mathbf{m}}$  from Proposition~\ref{prop:id1}, the expression of $A_{11}$ in Eqn.~\eqref{eqn:A1j} is reduces to
\begin{align*}
A_{11} &= \widehat{\mathscr{F}}^{-1} - q^2  \mathbf{E}_1 A_{21} - q   \mathbf{E}_2 A_{31}\\
&=  \widehat{\mathscr{F}}^{-1} - q^2  \mathbf{E}_1 \left[ \dfrac{1}{1-q^2}\left[ \frac{q^2(t-1)}{1-q^2(s-1)(t-1)}\right]\mathbf{E}_1^t\right] - q   \mathbf{E}_2 \left[ -\dfrac{q}{1-q^2}\left[\dfrac{1}{1-q^2(s-1)(t-1)}\right]\mathbf{E}_2^t \right]\\
&=  \widehat{\mathscr{F}}^{-1} -   \dfrac{q^2}{1-q^2}\left[ \frac{q^2(t-1)}{1-q^2(s-1)(t-1)}\right]\mathbf{E}_1 \mathbf{E}_1^t +   \dfrac{q^2}{1-q^2}\left[\dfrac{1}{1-q^2(s-1)(t-1)}\right]\mathbf{E}_2\mathbf{E}_2^t\\
&=  \widehat{\mathscr{F}}^{-1} -   \dfrac{q^2}{1-q^2}\left[ \frac{q^2(s-1)(t-1)}{1-q^2(s-1)(t-1)}\right]\mathbf{E}_{\mathbf{m} \mathbf{m}} +   \dfrac{q^2}{1-q^2}\left[\dfrac{t}{1-q^2(s-1)(t-1)}\right]\mathbf{E}_{\mathbf{m} \mathbf{m}}\\
&=  \widehat{\mathscr{F}}^{-1} +   \dfrac{q^2}{1-q^2}\left[ \frac{(t-1)}{1-q^2(s-1)(t-1)}+1\right]\mathbf{E}_{\mathbf{m} \mathbf{m}}. \\
\end{align*}

In view of induction hypothesis and the above calculations we establish the following:

	\begin{equation*}\label{eqn:F-inv}
		\begin{split}
			&\mathscr{F}^{-1}= \frac{1}{1-q^2} I \\
			&- \frac{q}{1-q^2} \begin{bmatrix}
				\widehat{\mathbf{A}} & \mathbf{0}_{\mathbf{m} \times (s-1)} & \dfrac{1}{1-q^2(s-1)(t-1)} E_2\\
				\mathbf{0}_{(s-1) \times \mathbf{m}} & \mathbf{0}_{s-1} & \dfrac{1}{1-q^2(s-1)(t-1)} J_{(s-1)\times t}\\
				\dfrac{1}{1-q^2(s-1)(t-1)} E_2^t & \dfrac{1}{1-q^2(s-1)(t-1)} J_{t \times (s-1)} & \mathbf{0}_t
			\end{bmatrix}\\
			\\
			&+\frac{q^2}{1-q^2} \begin{bmatrix}
				\widehat{\mathbf{B}} & \dfrac{t-1}{1-q^2(s-1)(t-1)} E_1  & \mathbf{0}_{\mathbf{m} \times t}\\
				\dfrac{t-1}{1-q^2(s-1)(t-1)} E_1^t  & \dfrac{t-1}{1-q^2(s-1)(t-1)} (  J_{s-1}-I_{s-1} ) &  \mathbf{0}_{(s-1)\times t}\\
				\mathbf{0}_{t \times \mathbf{m}} & \mathbf{0}_{t \times (s-1)} & \dfrac{s-1}{1-q^2(s-1)(t-1)} (J_{t} - I_{t})
			\end{bmatrix}\\
			\\
			&+\frac{q^2}{1-q^2} \begin{bmatrix}
				\textup{diag}(\widehat{\mu}) + \left[ \dfrac{t-1}{1-q^2(s-1)(t-1)} + 1 \right] E_{\mathbf{m} \mathbf{m}} & \mathbf{0}_{\mathbf{m} \times (s-1)}  & \mathbf{0}_{\mathbf{m} \times t}\\
				\mathbf{0}_{(s-1) \times \mathbf{m}}  & \dfrac{t-1}{1-q^2(s-1)(t-1)} I_{s-1} &  \mathbf{0}_{(s-1)\times t}\\
				\mathbf{0}_{t \times \mathbf{m}} & \mathbf{0}_{t \times (s-1)} & \dfrac{s-1}{1-q^2(s-1)(t-1)} I_{t}
			\end{bmatrix}\\
			&= \frac{1}{1-q^2} I - \frac{q}{1-q^2} \mathbf{A} +\frac{q^2}{1-q^2} \mathbf{B}  +\frac{q^2}{1-q^2} \textup{diag}(\mu).
\\		\end{split}
	\end{equation*}
This completes the proof.
\end{proof}

\section{Cofactor sum of bi-block graphs}
In this section, we determines the sum of all the cofactors of the exponential distance matrix for bi-block graphs. Notice that the cofactor sum of a matrix is equivalent to the sum of all entries of its adjoint matrix. Denote by $M^*$ the adjoint matrix of an $n \times n$ matrix $M$. Recall that $M^* = \det(M) M^{-1}$ when $M$ is invertible, which means that
\[
\cof(M) = \sum_{i,j} (M^*)_{ij} = \det(M) \sum_{i,j} (M^{-1})_{ij}.
\]

\begin{lem}\label{lem:F-1-cof}
	Let $G$ be the complete bipartite graph $K_{s,t}$ and $\mathscr{F}$ be the exponential distance matrix of $G$. Let $q$ be a nonzero indeterminate such that $q \ne \pm 1$ and $q^2 \ne \dfrac{1}{(s-1)(t-1)}$, then
	\[
	\cof(\mathscr{F}) = (1-q^2)^{s+t-1}  \left(\frac{2q(q-1)st}{(1-q^2)} + s+t \right)
	\]
\end{lem}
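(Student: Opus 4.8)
The plan is to exploit the identity $\cof(\mathscr{F}) = \det(\mathscr{F})\sum_{i,j}(\mathscr{F}^{-1})_{ij}$ recorded just above the statement, together with the closed forms for $\det(\mathscr{F})$ and $\mathscr{F}^{-1}$ already established in Lemma~\ref{lem:F-1-det} and Lemma~\ref{lem:F-1-inv}. Writing the sum of all entries of a matrix $M$ as $\mathds{1}^T M\, \mathds{1}$, the first task is to compute $\mathds{1}_{s+t}^T \mathscr{F}^{-1} \mathds{1}_{s+t}$. The hypotheses $q \neq \pm 1$ and $q^2 \neq \tfrac{1}{(s-1)(t-1)}$ guarantee both $1-q^2 \neq 0$ and $1-q^2(s-1)(t-1) \neq 0$, so $\mathscr{F}$ is invertible and the inverse formula of Lemma~\ref{lem:F-1-inv} applies.

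Next I would evaluate the entry-sum block by block using the elementary identities $\mathds{1}_s^T J_s \mathds{1}_s = s^2$, $\mathds{1}_t^T J_t \mathds{1}_t = t^2$, and $\mathds{1}_s^T J_{s\times t}\mathds{1}_t = st = \mathds{1}_t^T J_{t\times s}\mathds{1}_s$. The identity part $\tfrac{1}{1-q^2}I_{s+t}$ contributes $\tfrac{s+t}{1-q^2}$, while the bracketed $J$-block matrix, after applying the prefactor $\tfrac{q}{(1-q^2)[1-q^2(s-1)(t-1)]}$, contributes a multiple of $q(t-1)s^2 - 2st + q(s-1)t^2$. Collecting these yields a closed expression for $\sum_{i,j}(\mathscr{F}^{-1})_{ij}$.

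Multiplying by $\det(\mathscr{F}) = (1-q^2)^{s+t-1}\big[1-q^2(s-1)(t-1)\big]$ from Lemma~\ref{lem:F-1-det}, the factor $1-q^2(s-1)(t-1)$ cancels against the denominator coming from the $J$-blocks and one power of $1-q^2$ is absorbed, leaving an overall factor $(1-q^2)^{s+t-2}$ multiplying the polynomial $q^2(t-1)s^2 - 2qst + q^2(s-1)t^2 + (s+t) - q^2(s-1)(t-1)(s+t)$. The main (and essentially only) obstacle is the ensuing algebraic simplification: one must verify the polynomial identity $(t-1)s^2 + (s-1)t^2 - (s-1)(t-1)(s+t) = 2st - s - t$, which collapses the bracketed expression to $q^2(2st - s - t) - 2qst + (s+t)$. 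Recognizing that this equals $2q(q-1)st + (1-q^2)(s+t)$ and rewriting $(1-q^2)^{s+t-2} = (1-q^2)^{s+t-1}/(1-q^2)$ then delivers exactly the claimed form $(1-q^2)^{s+t-1}\left(\tfrac{2q(q-1)st}{1-q^2} + s + t\right)$.
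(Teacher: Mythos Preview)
Your proposal is correct and follows essentially the same approach as the paper: both compute $\sum_{i,j}(\mathscr{F}^{-1})_{ij}$ from the block description of $\mathscr{F}^{-1}$ in Lemma~\ref{lem:F-1-inv}, simplify the resulting expression via the identity $(t-1)s^2+(s-1)t^2-(s-1)(t-1)(s+t)=2st-s-t$, and then multiply by $\det(\mathscr{F})$ from Lemma~\ref{lem:F-1-det} to obtain the stated formula. The only cosmetic difference is that the paper carries out the simplification over the common denominator $(q^2-1)[q^2(s-1)(t-1)-1]$ before multiplying by the determinant, whereas you multiply first and then simplify; the algebra is the same.
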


\begin{proof}
	From Lemma~\ref{lem:F-1-inv}, we have
	\[
	\mathscr{F}^{-1} = \frac{q}{(q^2-1) \left[q^2(s-1)(t-1)-1\right]}  \begin{bmatrix}
		q(t-1)J_s & -J_{s \times t}\\
		- J_{t \times s} & q(s-1)J_t 
	\end{bmatrix} - \frac{1}{q^2-1} I.
	\]
	Hence
	\begin{equation*}
		\begin{split}
			\sum_{i,j} (\mathscr{F}^{-1})_{ij} &= \frac{q}{(q^2-1) \left[q^2(s-1)(t-1)-1\right]}  \times \left[ q(t-1)s^2 -2st+q(s-1)t^2\right] - \frac{s+t}{q^2-1}\\
			&= \frac{q\times \left[ q(t-1)s^2 -2st+q(s-1)t^2\right]- (s+t)(q^2(s-1)(t-1)-1)}{(q^2-1) \left[q^2(s-1)(t-1)-1\right]}\\
			&= \frac{2q(q-1)st - (s+t)(q^2-1)}{(q^2-1) \left[q^2(s-1)(t-1)-1\right]}.
		\end{split}
	\end{equation*}
	Next, using the fact that $\det(\mathscr{F})= (1-q^2)^{s+t-1} \left[1-q^2(s-1)(t-1)\right]$ from Lemma~\ref{lem:F-1-det} we have the required result.
\end{proof}

\begin{theorem}
	Let $G$ be an \( \mathbf{n} \)-vertex bi-block graph with $r > 1$ blocks $K_{m_i,n_i}$, $i=1,2,\cdots,r$, where $\mathbf{n} = \sum_{i=1}^r (m_i+n_i)-r+1$ and $\mathscr{F}$ be the exponential distance matrix of $G$. Let $q$ be a nonzero indeterminate such that $q \ne \pm 1$ and $q^2 \ne \dfrac{1}{(m_i-1)(n_i-1)}$ for each $1 \le i \le r$. Then
	\begin{align*}
		\cof(\mathscr{F}) &=(1-q^2)^{\mathbf{n}-1} \left[\prod_{i=1}^{r} \left(1-q^2(m_i-1)(n_i-1)\right)\right]\\
		&\quad \quad \times \left[\sum_{i=1}^r \left(\frac{2q(q-1)m_in_i}{(1-q^2)(1-q^2(m_i-1)(n_i-1))} + \frac{m_i+n_i}{1-q^2(m_i-1)(n_i-1)} \right) -(r-1) \right].
	\end{align*}
\end{theorem}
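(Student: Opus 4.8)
The plan is to use the identity $\cof(\mathscr{F}) = \det(\mathscr{F})\sum_{i,j}(\mathscr{F}^{-1})_{ij} = \det(\mathscr{F})\,\mathds{1}^T\mathscr{F}^{-1}\mathds{1}$ recorded at the start of this section, exactly as in the proof of Lemma~\ref{lem:F-1-cof}. The determinant $\det(\mathscr{F}) = (1-q^2)^{\mathbf{n}-1}\prod_{i=1}^r\bigl(1-q^2(m_i-1)(n_i-1)\bigr)$ is already the first theorem of Section~3, and it is precisely the prefactor in the claimed formula. Hence the entire problem reduces to showing that the scalar $\mathds{1}^T\mathscr{F}^{-1}\mathds{1}$ equals the bracketed factor $\sum_{i=1}^r\bigl(\tfrac{2q(q-1)m_in_i}{(1-q^2)w_i}+\tfrac{m_i+n_i}{w_i}\bigr)-(r-1)$, where I write $w_i=1-q^2(m_i-1)(n_i-1)$ throughout. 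The hypotheses $q\neq\pm1$ and $q^2\neq\tfrac{1}{(m_i-1)(n_i-1)}$ are exactly what force each factor to be nonzero, so that $\mathscr{F}$ is invertible and Theorem~\ref{thm:inv} applies.

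Feeding the closed form of Theorem~\ref{thm:inv} into $\mathds{1}^T\mathscr{F}^{-1}\mathds{1}$ gives
$$\mathds{1}^T\mathscr{F}^{-1}\mathds{1} = \frac{\mathbf{n}}{1-q^2} - \frac{q}{1-q^2}\,\mathds{1}^T\mathbf{A}\mathds{1} + \frac{q^2}{1-q^2}\,\mathds{1}^T\mathbf{B}\mathds{1} + \frac{q^2}{1-q^2}\sum_{v}\mu(v),$$
so the next step is to evaluate the three sums of entries directly from Eqns.~\eqref{eqn:defn-A}--\eqref{eqn:defn-mu}, block by block. Reading $\mathbf{A}$ as a weighted adjacency matrix, each of the $m_in_i$ edges of block $i$ contributes its weight twice as an ordered pair, so $\mathds{1}^T\mathbf{A}\mathds{1}=\sum_i \tfrac{2m_in_i}{w_i}$. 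For $\mathbf{B}$, summing over the $m_i(m_i-1)$ ordered non-adjacent pairs inside $X_i$ and the $n_i(n_i-1)$ such pairs inside $Y_i$ yields $\mathds{1}^T\mathbf{B}\mathds{1}=\sum_i\tfrac{(m_i-1)(n_i-1)(m_i+n_i)}{w_i}$. For $\mu$, the weight part contributes $\sum_i\tfrac{m_i(n_i-1)+n_i(m_i-1)}{w_i}=\sum_i\tfrac{2m_in_i-m_i-n_i}{w_i}$, while the block-index part gives $\sum_v\bigl(bi_G(v)-1\bigr)=\sum_i(m_i+n_i)-\mathbf{n}=r-1$, using the vertex count $\mathbf{n}=\sum_i(m_i+n_i)-(r-1)$ together with the incidence identity $\sum_v bi_G(v)=\sum_i(m_i+n_i)$.

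Assembling these, the two occurrences of $r-1$ combine cleanly: the term $\tfrac{-(r-1)}{1-q^2}$ coming from $\mathbf{n}$ and the term $\tfrac{q^2(r-1)}{1-q^2}$ coming from $\sum_v\mu(v)$ add to exactly $-(r-1)$, reproducing the additive $-(r-1)$ of the target. What remains is purely per-block, and I would group all block-$i$ contributions over the common denominator $(1-q^2)w_i$ and verify the polynomial identity
$$(m_i+n_i)w_i - 2qm_in_i + q^2\bigl(m_i^2n_i+m_in_i^2-m_i^2-n_i^2\bigr) = 2q(q-1)m_in_i + (1-q^2)(m_i+n_i).$$
Dividing by $(1-q^2)w_i$ then recovers the $i$-th summand $\tfrac{2q(q-1)m_in_i}{(1-q^2)w_i}+\tfrac{m_i+n_i}{w_i}$, and multiplying the resulting scalar by the known determinant completes the proof.

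The main obstacle is bookkeeping rather than conceptual. The delicate points are: (i) counting the entries of $\mathbf{A}$ and $\mathbf{B}$ as sums over \emph{ordered} pairs, so that the factors of $2$ and the $(m_i-1),(n_i-1)$ weights appear correctly and no edge or pair is double-counted across blocks (which is safe since blocks partition the edges and share at most one cut vertex); (ii) the incidence count producing the $r-1$ term; and (iii) the per-block identity above, whose verification hinges on the cancellation $-(m_i+n_i)(m_i-1)(n_i-1)+\bigl(m_i^2n_i+m_in_i^2-m_i^2-n_i^2\bigr)=2m_in_i-m_i-n_i$ obtained by expanding $(m_i-1)(n_i-1)(m_i+n_i)$. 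No separate induction is needed, since the explicit inverse of Theorem~\ref{thm:inv} already encodes the recursive block structure; this also shows the formula remains valid at $r=1$, where it specializes to Lemma~\ref{lem:F-1-cof}.
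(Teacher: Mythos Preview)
Your argument is correct and takes a genuinely different route from the paper's. The paper proceeds by induction on the number $r$ of blocks: it peels off a leaf block $K_{s,t}$, returns to the explicit $3\times 3$ block decomposition of $\mathscr{F}^{-1}$ obtained inside the proof of Theorem~\ref{thm:inv}, sums the entries of each block $A_{ij}$ by hand, and shows that $\sum_{i,j}(\mathscr{F}^{-1})_{ij}=\sum_{i,j}(\widehat{\mathscr{F}}^{-1})_{ij}+\bigl(\tfrac{2q(q-1)st}{(1-q^2)w_r}+\tfrac{s+t}{w_r}\bigr)-1$, then invokes the inductive hypothesis. You instead bypass the induction entirely by using only the \emph{statement} of Theorem~\ref{thm:inv}: you compute $\mathds{1}^T\mathbf{A}\mathds{1}$, $\mathds{1}^T\mathbf{B}\mathds{1}$, and $\sum_v\mu(v)$ directly from their combinatorial definitions in Eqns.~\eqref{eqn:defn-A}--\eqref{eqn:defn-mu}, exploiting that blocks partition the edges and share at most one vertex. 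Your approach is shorter and more transparent---it makes visible that the per-block additivity of the cofactor sum is already encoded in the additive block structure of $\mathbf{A}$, $\mathbf{B}$, $\mu$, and that the $-(r-1)$ correction is exactly the double-counting of cut vertices via $\sum_v bi_G(v)=\sum_i(m_i+n_i)$. The paper's route, on the other hand, does not require re-deriving the incidence identity or the polynomial simplification you isolate, since those are absorbed into the base case and the single-block recursion; it also exhibits directly the recurrence $\cof$ satisfies under attaching a leaf block.
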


\begin{proof}
	We prove the results using induction on the number of blocks $r$. First assume that $r=1$, then $G$ is an \( \mathbf{n} \)-vertex complete bipartite graph. Without loss of generality we assume $G=K_{s,t}$. Then, using Lemma~\ref{lem:F-1-cof}, we have
	\[
	\cof(\mathscr{F}) = (1-q^2)^{s+t-1}  \left(\frac{2q(q-1)st}{(1-q^2)} + s+t \right),
	\]
	which matches with the expression for $r=1$. Hence the result is true for $r=1$. Now suppose that $r>1$ and the result is true for any bi-block graphs with $r-1$ blocks. Thus, applying inductive hypothesis to $H$, we have
		\begin{align*}
		\cof(\widehat{\mathscr{F}}) &=(1-q^2)^{\mathbf{m}-1} \left[\prod_{i=1}^{r-1} \left(1-q^2(m_i-1)(n_i-1)\right)\right]\\
		&\quad \quad \times \left[\sum_{i=1}^{r-1} \left(\frac{2q(q-1)m_in_i}{(1-q^2)(1-q^2(m_i-1)(n_i-1))} + \frac{m_i+n_i}{1-q^2(m_i-1)(n_i-1)} \right) -(r-2)\right].
	\end{align*}
	Hence 
	\[
	\sum_{i,j} (\hat{\mathscr{F}}^{-1})_{ij} = \sum_{i=1}^{r-1} \left(\frac{2q(q-1)m_in_i}{(1-q^2)(1-q^2(m_i-1)(n_i-1))} + \frac{m_i+n_i}{1-q^2(m_i-1)(n_i-1)} \right) - (r-2).
	\]
	Next, let us consider an \( \mathbf{n} \)-vertex bi-block graph with $r > 1$ blocks, where the leaf block $K_{s,t}$ is attached to $H$ at a cut-vertex. Using the $3 \times 3$ block partition of the matrix $\mathscr{F}^{-1}$ from the proof of the Theorem~\ref{thm:inv}, we have
	\begin{equation*}
		\begin{split}
			\sum_{i,j} (\mathscr{F}^{-1})_{ij} &= \sum_{i,j} (A_{11})_{ij} + \sum_{i,j} (A_{22})_{ij} + \sum_{i,j} (A_{33})_{ij} + 2 \sum_{i,j} (A_{12})_{ij} + 2 \sum_{i,j} (A_{13})_{ij} + 2 \sum_{i,j} (A_{23})_{ij}\\
			&= \sum_{i,j} (\widehat{\mathscr{F}}^{-1})_{ij} + \frac{q^2}{1-q^2} \left[ \frac{t-1}{1-q^2(s-1)(t-1)} +1 \right]\\
			&\quad +  \frac{q^2(t-1)(s-1)^2}{(1-q^2)(1-q^2(s-1)(t-1))} +\frac{s-1}{1-q^2} + \frac{q^2(s-1)t^2}{(1-q^2)(1-q^2(s-1)(t-1))} + \frac{t}{1-q^2}\\
			&\quad + \frac{2q^2}{1-q^2} \left[ \frac{(t-1)(s-1)}{1-q^2(s-1)(t-1)}\right] - \frac{2qt}{(1-q^2)(1-q^2(s-1)(t-1))}\\
			&\quad - \frac{2qt(s-1)}{(1-q^2)(1-q^2(s-1)(t-1))}\\
			&= \sum_{i,j} (\widehat{\mathscr{F}}^{-1})_{ij}\\ & \quad+ \frac{q^2}{(1-q^2)(1-q^2(s-1)(t-1))}			\left[(t-1)+(t-1)(s-1)^2+(s-1)t^2+2(t-1)(s-1)\right] \\
			&\quad + \frac{q^2}{1-q^2} +\frac{s+t-1}{1-q^2}
			- \frac{2qst}{(1-q^2)(1-q^2(s-1)(t-1))}\\
			&= \sum_{i,j} (\widehat{\mathscr{F}}^{-1})_{ij} + \frac{q^2(s^2(t-1)+t^2(s-1)) -2qst}{(1-q^2)(1-q^2(s-1)(t-1))} +\frac{s+t}{1-q^2} -1\\
			&= \sum_{i,j} (\hat{\mathscr{F}}^{-1})_{ij} + \frac{2q(q-1)st + (s+t)(1-q^2)}{(1-q^2) \left[1-q^2(s-1)(t-1)\right]} -1.
		\end{split}
	\end{equation*}
	Since we have taken the leaf block to be $K_{s,t}$, we have $m_r=s$ and $n_r=t$. Thus we have
	\[
	\sum_{i,j} (\mathscr{F}^{-1})_{ij} = \sum_{i,j} (\hat{\mathscr{F}}^{-1})_{ij} + \left(\frac{2q(q-1)m_rn_r}{(1-q^2)(1-q^2(m_r-1)(n_r-1))} + \frac{m_r+n_r}{1-q^2(m_r-1)(n_r-1)} \right) -1.
	\]
	Finally, using the induction hypothesis for $H$, the result follows.
\end{proof}

\section{$q$-Laplacian of bi-block graphs}
Recall that \textbf{deg} and \textbf{adj} represent the diagonal degree matrix and adjacency matrix of a connected graph G, respectively. The Laplacian matrix of $G$ is defined to be $L = \textbf{deg} - \textbf{adj}$. The concept $q$-Laplacian matrix was introduced for trees, when Bapat et al. \cite{Bapat0} investigated the inverse of exponential distance matrix for trees.

For a tree $T$ on $\mathbf{n}$ vertices, let $\mathbf{d}^t = (d_1,d_2,\cdots,d_{\mathbf{n}})$ and $\mathbf{z} = \mathbf{d}-\mathds{1}$, where $d_i$ is the degree of the $i^{th}$ vertex. Then the $q$-Laplacian matrix, $\mathscr{L}$, of a tree $T$ is given by
\[
\mathscr{L} = qL-(q-1)I+q(q-1) \textup{diag}(\mathbf{z}),
\]
where $L$ is the usual Laplacian matrix of $T$. Note that, when $q=1$, then $\mathscr{L} = L$. More interestingly, in \cite{Bapat0} the authors showed the relation between $q$-Laplacian matrix and the inverse of exponential distance matrix for trees. 

\begin{lem}\cite{Bapat0}
	For a tree and any nonzero indeterminate $q \neq \pm 1$,
	\[
	\mathscr{F}^{-1} = \frac{1}{1-q^2} \mathscr{L}.
	\]
\end{lem}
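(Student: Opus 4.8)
The plan is to realize a tree as the special case of a bi-block graph in which every block is the single edge $K_{1,1}$, i.e. $m_i=n_i=1$ for all $i$, and then read off the inverse directly from Theorem~\ref{thm:inv}. Before invoking that theorem I first confirm its hypothesis, namely that $\mathscr{F}$ is invertible: substituting $m_i=n_i=1$ into the determinant formula of the previous section gives $\det\mathscr{F}=(1-q^2)^{\mathbf{n}-1}\prod_{i}[1-q^2\cdot 0\cdot 0]=(1-q^2)^{\mathbf{n}-1}$, which is nonzero precisely because $q\neq\pm 1$. Hence Theorem~\ref{thm:inv} applies and yields $\mathscr{F}^{-1}=\tfrac{1}{1-q^2}I-\tfrac{q}{1-q^2}\mathbf{A}+\tfrac{q^2}{1-q^2}\mathbf{B}+\tfrac{q^2}{1-q^2}\textup{diag}(\mu)$.

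Next I would evaluate the three pieces $\mathbf{A}$, $\mathbf{B}$, and $\mu$ under $m_i=n_i=1$. In each block the weight $\tfrac{1}{1-q^2(m_i-1)(n_i-1)}$ collapses to $1$, so from~\eqref{eqn:defn-A} the matrix $\mathbf{A}$ is exactly the adjacency matrix $\mathbf{adj}$ of the tree. Because every block $K_{1,1}$ contains only one vertex in each part, there are no two distinct non-adjacent vertices lying in a common block, so every entry of $\mathbf{B}$ in~\eqref{eqn:defn-B} vanishes and $\mathbf{B}=\mathbf{0}$. Finally, in~\eqref{eqn:defn-mu} the two sums have numerators $n_i-1=0$ and $m_i-1=0$ and therefore disappear, leaving $\mu(v)=k-1$, where $k=bi_G(v)$ is the number of blocks containing $v$. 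The one genuinely graph-theoretic point, and the step I expect to require the most care, is the identification $k=d_v$: in a tree every edge is its own block, so the blocks through $v$ correspond bijectively to the edges incident to $v$, whence $\mu(v)=d_v-1=z_v$ and $\textup{diag}(\mu)=\textup{diag}(\mathbf{z})$. Substituting these three evaluations gives $\mathscr{F}^{-1}=\tfrac{1}{1-q^2}I-\tfrac{q}{1-q^2}\mathbf{adj}+\tfrac{q^2}{1-q^2}\textup{diag}(\mathbf{z})$.

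It then remains to match this against $\tfrac{1}{1-q^2}\mathscr{L}$, which is a purely algebraic verification. Writing $L=\mathbf{deg}-\mathbf{adj}$ and using $\mathbf{deg}=\textup{diag}(\mathbf{d})=I+\textup{diag}(\mathbf{z})$ (since $\mathbf{z}=\mathbf{d}-\mathds{1}$), I would expand $\mathscr{L}=qL-(q-1)I+q(q-1)\textup{diag}(\mathbf{z})$ and collect terms: the identity part is $qI-(q-1)I=I$, the diagonal part is $[q+q(q-1)]\textup{diag}(\mathbf{z})=q^2\textup{diag}(\mathbf{z})$, and the off-diagonal part is $-q\,\mathbf{adj}$, so that $\mathscr{L}=I-q\,\mathbf{adj}+q^2\textup{diag}(\mathbf{z})$. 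Dividing by $1-q^2$ reproduces exactly the expression for $\mathscr{F}^{-1}$ obtained above, completing the proof. The whole argument is thus an instance of Theorem~\ref{thm:inv} plus one short calculation; no new estimates are needed.
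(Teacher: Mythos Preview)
Your derivation is correct, but it differs from the paper in a basic way: the paper does not prove this lemma at all. The statement is quoted verbatim from \cite{Bapat0} as a known result, with no argument supplied; its role in the paper is purely motivational, to recall the tree case before the authors introduce their generalized $q$-Laplacian for bi-block graphs.

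What you have written is therefore not a reconstruction of the paper's proof but an independent check that the tree case falls out of Theorem~\ref{thm:inv}. Every step is sound: the specialization $m_i=n_i=1$ makes $\mathbf{A}=\mathbf{adj}$, $\mathbf{B}=\mathbf{0}$, and $\mu(v)=bi_G(v)-1=d_v-1$, and your algebraic expansion of $\mathscr{L}$ for trees into $I-q\,\mathbf{adj}+q^2\textup{diag}(\mathbf{z})$ is accurate. This is a worthwhile sanity check---it confirms that the paper's bi-block formula is genuinely a generalization of the Bapat--Lal--Pati result---but it is not something the paper itself carries out. There is no circularity in your approach, since the proof of Theorem~\ref{thm:inv} is self-contained and does not invoke the cited lemma.
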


In this section, we extend the definition of $q$-Laplacian matrix to bi-block graphs and show how it is related with the exponential distance matrix.

Let $G$ be a \( \mathbf{n} \)-vertex bi-block graph with $r > 1$ blocks $K_{m_i,n_i}$, $i=1,2,\cdots,r$, where $\mathbf{n} = \sum_{i=1}^r (m_i+n_i)-r+1$. Then, we define an \( \mathbf{n} \)-dimensional vector $\mathbf{x}$ as follows:
\[
\mathbf{x}_G(v) = \sum_{v \in X_i}\frac{1-q^2(m_i-2)(n_i-1)}{1-q^2(m_i-1)(n_i-1)}  + \sum_{v \in Y_i} \frac{1-q^2(m_i-1)(n_i-2)}{1-q^2(m_i-1)(n_i-1)}  - (1-q^2)(k-1),
\]
where $v \in V(G)$ is a vertex with block degree of $v$, $\hat{d}_G(v) = k$ and the summation is taken over all the blocks $K_{m_i,n_i}$ that contains $v$ as a vertex. Note that, when $q=1$ and $m_i=n_i=1$ for all $i=1,2,\cdots,r$, $\mathbf{x}_G(v) = deg(v)$, \textit{i.e.} $\mathbf{x}_G = \mathbf{d}$. Next, we define the $q$-Laplacian matrix for bi-block graphs as follows:
\[
\mathscr{L} = \textup{diag}(\mathbf{x}) + q(q\mathbf{B}-\mathbf{A}),
\]
where the matrices $\mathbf{A}$ and $\mathbf{B}$ are same as defined in where 	$\mathbf{A}, \mathbf{B}$ and $\mu$ as defined in~Eqns.~\eqref{eqn:defn-A} and~\eqref{eqn:defn-B}. One can easily verify that when  $q=1$ and $m_i=n_i=1$ for all $i=1,2,\cdots,r$, $q(qB-A) = \textbf{adj}$, the adjacency matrix of the tree. Hence, $\mathscr{L} = \textbf{deg} - \textbf{adj} = L$, matches with the usual Laplacian matrix for trees. Thus $\mathscr{L}$ is a generalization of the Laplacian matrix for the bi-block graphs.

In the next result, we express the inverse of the exponential distance matrix of the bi-block graphs in terms of the $q$-Laplacian matrix for the same.

\begin{theorem}
	Let $G$ be an \( \mathbf{n} \)-vertex bi-block graph with $r > 1$ blocks $K_{m_i,n_i}$, $i=1,2,\cdots,r$, where $\mathbf{n} = \sum_{i=1}^r (m_i+n_i)-r+1$ and $\mathscr{F}, \mathscr{L}$ be the exponential distance matrix and $q$-Laplacian matrix of $G$, respectively. Let $q$ be a nonzero indeterminate such that $q \ne \pm 1$ and  $\mathscr{F}$ is invertible. Then
	\[
	\mathscr{F}^{-1} = \frac{1}{1-q^2} \mathscr{L}.
	\]
\end{theorem}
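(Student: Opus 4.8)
The plan is to bypass any fresh matrix inversion and reduce the claim to a direct comparison of two closed forms already at hand, followed by a single scalar identity checked entrywise on the diagonal; no new induction on blocks is needed. Starting from the definition $\mathscr{L} = \textup{diag}(\mathbf{x}) + q(q\mathbf{B} - \mathbf{A})$ and dividing by $1-q^2$ gives
$$\frac{1}{1-q^2}\mathscr{L} = \frac{1}{1-q^2}\textup{diag}(\mathbf{x}) - \frac{q}{1-q^2}\mathbf{A} + \frac{q^2}{1-q^2}\mathbf{B}.$$
Since $\mathscr{F}$ is assumed invertible, Theorem~\ref{thm:inv} supplies
$$\mathscr{F}^{-1} = \frac{1}{1-q^2}I_{\mathbf{n}} - \frac{q}{1-q^2}\mathbf{A} + \frac{q^2}{1-q^2}\mathbf{B} + \frac{q^2}{1-q^2}\textup{diag}(\mu),$$
with the \emph{same} matrices $\mathbf{A}$ and $\mathbf{B}$. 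The off-diagonal content, carried entirely by $-\tfrac{q}{1-q^2}\mathbf{A}$ and $\tfrac{q^2}{1-q^2}\mathbf{B}$, already agrees term for term, so the theorem collapses to the purely diagonal identity $\textup{diag}(\mathbf{x}) = I_{\mathbf{n}} + q^2\textup{diag}(\mu)$, equivalently $\mathbf{x}_G(v) = 1 + q^2\mu(v)$ for every vertex $v$.

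The one substantive step is verifying this scalar identity at a fixed vertex $v$ of block degree $k$. I would rewrite each summand defining $\mathbf{x}_G(v)$ by splitting its numerator against the shared denominator: for a block with $v \in X_i$, the relation $(m_i-2)(n_i-1) = (m_i-1)(n_i-1) - (n_i-1)$ yields
$$\frac{1-q^2(m_i-2)(n_i-1)}{1-q^2(m_i-1)(n_i-1)} = 1 + \frac{q^2(n_i-1)}{1-q^2(m_i-1)(n_i-1)},$$
and symmetrically $\tfrac{1-q^2(m_i-1)(n_i-2)}{1-q^2(m_i-1)(n_i-1)} = 1 + \tfrac{q^2(m_i-1)}{1-q^2(m_i-1)(n_i-1)}$ for $v \in Y_i$. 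Summing over the $k$ blocks incident to $v$ contributes $k$ from the leading ones together with exactly $q^2$ times the two sums already appearing in $\mu(v)$. Collapsing the bookkeeping constants via $k - (1-q^2)(k-1) = 1 + q^2(k-1)$ then reproduces $1 + q^2\mu(v)$ exactly, since the residual $q^2(k-1)$ matches $q^2$ times the block-index correction $(k-1)$ in $\mu(v)$.

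Substituting $\textup{diag}(\mathbf{x}) = I_{\mathbf{n}} + q^2\textup{diag}(\mu)$ back into $\tfrac{1}{1-q^2}\mathscr{L}$ turns its diagonal part into $\tfrac{1}{1-q^2}I_{\mathbf{n}} + \tfrac{q^2}{1-q^2}\textup{diag}(\mu)$, which is precisely the diagonal part of $\mathscr{F}^{-1}$; combined with the already-matching $\mathbf{A}$ and $\mathbf{B}$ terms, the equality $\mathscr{F}^{-1} = \tfrac{1}{1-q^2}\mathscr{L}$ follows. I do not anticipate a genuine obstacle: the argument is purely algebraic, and the only place requiring care is the numerator splitting above, where it is easy to swap $(n_i-1)$ and $(m_i-1)$ between the $X_i$ and $Y_i$ cases or to mishandle the $(1-q^2)(k-1)$ term. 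A convenient consistency check on the definitions is the degenerate tree regime $q=1$, $m_i=n_i=1$, where $\mathbf{x}_G = \mathbf{d}$ and $\mathscr{L}$ collapses to the ordinary Laplacian, in agreement with the known tree formula of Bapat et al.
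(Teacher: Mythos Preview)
Your proposal is correct and follows essentially the same approach as the paper: both invoke Theorem~\ref{thm:inv}, observe that the $\mathbf{A}$ and $\mathbf{B}$ contributions already match, and reduce the claim to the diagonal identity $\mathbf{x}_G(v)=1+q^2\mu(v)$, which is then verified by the same numerator splitting and the bookkeeping $k-(1-q^2)(k-1)=1+q^2(k-1)$. The only cosmetic difference is that the paper computes $1+q^2\mu(v)$ and simplifies to $\mathbf{x}(v)$, whereas you run the identical algebra in the opposite direction.
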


\begin{proof}
	From Theorem~\ref{thm:inv}, we have 
	\[
	\mathscr{F}^{-1} = \frac{1}{1-q^2} I - \frac{q}{1-q^2} \mathbf{A} +\frac{q^2}{1-q^2} \mathbf{B}  +\frac{q^2}{1-q^2} \textup{diag}(\mu),
	\]
	\textit{i.e.}
	\[
	(1-q^2)\mathscr{F}^{-1} = I - q \mathbf{A} + q^2 \mathbf{B}  + q^2\textup{diag}(\mu).
	\]
	From the definition of the $q$-Laplacian matrix for bi-block graphs, we have 
	\[
	\mathscr{L} = \textup{diag}(\mathbf{x}) + q(q\mathbf{B}-\mathbf{A}).
	\]
	Thus, if we can show that \[\textup{diag}(\mathbf{x}) = I + q^2\textup{diag}(\mu) \] then we are done. We verify this as follows: Let $v$ be a vertex in $G$ with  $bi_{G}(v)=k$, the block index of $v$. Then,
	\begin{align*}
		1+q^2 \mu(v) &= 1 + q^2\left[\sum_{v \in X_i} \frac{n_i-1}{1-q^2(m_i-1)(n_i-1)} + \sum_{v \in Y_i} \frac{m_i-1}{1-q^2(m_i-1)(n_i-1)} + (k-1)\right]\\
		&= 1+q^2(k-1)+ q^2\left[\sum_{v \in X_i} \frac{n_i-1}{1-q^2(m_i-1)(n_i-1)} + \sum_{v \in Y_i} \frac{m_i-1}{1-q^2(m_i-1)(n_i-1)} \right]\\
		&= (q^2-1)(k-1) + \left[\sum_{v \in X_i} \left(1 + \frac{q^2(n_i-1)}{1-q^2(m_i-1)(n_i-1)} \right) \right.\\ 
		&\qquad  \qquad \qquad \qquad\qquad\qquad\qquad\qquad\qquad \left.  + \sum_{v \in Y_i} \left(1 +  \frac{q^2(m_i-1)}{1-q^2(m_i-1)(n_i-1)} \right) \right]\\		
					&= (q^2-1)(k-1) + \left[\sum_{v \in X_i}\frac{1-q^2(m_i-2)(n_i-1)}{1-q^2(m_i-1)(n_i-1)}  + \sum_{v \in Y_i} \frac{1-q^2(m_i-1)(n_i-2)}{1-q^2(m_i-1)(n_i-1)} \right]\\
		&= \sum_{v \in X_i}\frac{1-q^2(m_i-2)(n_i-1)}{1-q^2(m_i-1)(n_i-1)}  + \sum_{v \in Y_i} \frac{1-q^2(m_i-1)(n_i-2)}{1-q^2(m_i-1)(n_i-1)} - (1-q^2)(k-1)\\
		&=\mathbf{x}(v).
	\end{align*}
	Hence the result follows.
\end{proof}

\small{

}

\end{document}